\newtheorem{lemma}{Lemma}
\newtheorem{prop}{Proposition}
\newtheorem{theo}{Theorem}
\theoremstyle{definition}
\newtheorem{defin}{Definition}
\newtheorem{remark}{Remark}
\newcommand{\C}{\mathbb{C}}
\newcommand{\Kk}{\mathcal{K}}
\newcommand{\N}{\mathbb{N}}
\newcommand{\R}{\mathbb{R}}
\newcommand{\Oo}{\mathcal{O}}
\newcommand{\EE}{\mathbb{E}}
\newcommand{\MM}{\mathbb{M}}
\newcommand{\NN}{\mathbb{N}}
\newcommand{\RR}{\mathbb{R}}
\def\author@andify{%
  \nxandlist {\unskip ,\penalty-1 \space\ignorespaces}%
    {\unskip {} \@@and~}%
    {\unskip \penalty-2 \space \@@and~}%
}
\begin{document}
\title[Summability formal solutions generalized moment integro-differential equations]{Summability of formal solutions for a family of generalized moment integro-differential equations}
\author{Alberto Lastra}
\address{Departamento de F\'isica y Matem\'aticas\\
University of Alcal\'a\\
Ap. de Correos 20, E-28871 Alcal\'a de Henares (Madrid), Spain}
\email{alberto.lastra@uah.es}
\author{S{\l}awomir Michalik}
\address{Faculty of Mathematics and Natural Sciences,
College of Science\\
Cardinal Stefan Wyszy\'nski University\\
W\'oycickiego 1/3,
01-938 Warszawa, Poland}
\email{s.michalik@uksw.edu.pl}
\urladdr{\url{http://www.impan.pl/~slawek}}
\author{Maria Suwi\'nska}
\address{Faculty of Mathematics and Natural Sciences,
College of Science\\
Cardinal Stefan Wyszy\'nski University\\
W\'oycickiego 1/3,
01-938 Warszawa, Poland}
\email{m.suwinska@op.pl}
\date{}
\keywords{summability, formal solution, moment estimates, moment derivatives, moment partial differential equations, moment integro-differential equations}
\subjclass[2010]{35C10, 35G10}
\begin{abstract}

Generalized summability results are obtained regarding formal solutions of certain families of linear moment integro-differential equations with time variable coefficients. The main result leans on the knowledge of the behavior of the moment derivatives of the elements involved in the problem.

A refinement of the main result is also provided giving rise to more accurate results which remain valid in wide families of problems of high interest in practice, such as fractional integro-differential equations.

\end{abstract}

\maketitle
\thispagestyle{empty}

\section{Introduction}

The present work deals with the summability properties of the formal solutions of certain families of generalized moment integro-differential equations described by the formula
\begin{equation}
\left(1-\sum_{i\in\Kk}\sum_{q=0}^{p_{i}}a_{iq}(z)\partial_{m_{1},t}^{-i}\partial_{m_{2},z}^{q}\right)u(t,z)=\hat{f}(t,z)\label{eq:mainintro},
\end{equation}
where $\mathcal{K}\subseteq\{1,\ldots,\kappa\}$ stands for a finite subset of natural numbers, $p_i\ge 0$ for all $i\in\mathcal{K}$, $a_{iq}(z)$ are holomorphic functions in a neighborhood of the origin and $\hat{f}$ is a formal power series in the variables $(t,z)$. Differentiation and integration operators given in the equation are of moment nature. These operators generalize the usual derivation and integration operators, respectively. 

Under certain assumptions to be precised later the first main result of the present work (Theorem~\ref{teopral}) states that the formal solution of (\ref{eq:mainintro}) is summable with respect to some strongly regular sequence (see Section~\ref{secsrs}) whenever $\hat{f}$ is summable with respect to that sequence. The equivalence is attained in a refinement of the first main result (Theorem~\ref{coropral}) under more restrictive assumptions on the strongly regular sequences involved. Such restrictive conditions are satisfied in practice in the framework of the most outstanding families of strongly regular sequences, namely when dealing with integro-differential equations (Gevrey settings) and also in the framework of fractional integro-differential equations (see Section~\ref{secscopeA}).

The concept of a moment derivative was first described by W. Balser and M. Yoshino in 2010, in the seminal work~\cite{BY}. Given a sequence of positive real numbers $m=(m(p))_{p\ge0}$, the operator $\partial_{m,z}:\mathbb{C}[[z]]\to\mathbb{C}[[z]]$ is defined by 
$$\partial_{m,z}\left(\sum_{p\ge0}\frac{f_p}{m(p)}z^p\right)=\sum_{p\ge0}\frac{f_{p+1}}{m(p)}z^p.$$
This definition generalizes the classical derivation of power series with positive radius of convergence, particularized to $m=(p!)_{p\ge0}$, and also Caputo $1/k$-fractional derivative $\partial_z^{1/k}$ if  $m:=(\Gamma(1+\frac{p}{k}))_{p\ge0}$ is considered. The second main result of the present study is satisfied in both situations.

The wide range of sequences belonging to the class of strongly regular sequences allow to study a variety of operators and therefore of functional equations under more generality. This fact has caused the increasing interest of the scientific community regarding two issues: the properties of the spaces of series and functions associated with strongly regular sequences (see for example~\cite{sanzproceedings,jjs17} and the references therein) and also the summability properties of formal solutions of functional equations in the complex domain. This work approaches the second direction. 

After the aforementioned work~\cite{BY}, the study of moment differential equations in the complex domain was followed by different studies. In~\cite{M}, the second author gives a solution of families of Cauchy problems regarding moment partial differential equations with respect to different moments associated with each variable. The convergence and summability of the formal solutions of homogeneous (resp. inhomogeneous) linear moment partial differential equations with constant coefficients are considered by the second author in~\cite{michalik13jmaa} (resp. in~\cite{michalik17fe}). We also refer to~\cite{lastramaleksanz} in this direction.

In addition to this, the theory of summability of formal solutions of generalized moment partial differential equations leans on the growth estimates of the coefficients. This concern has been discussed in~\cite{michaliksuwinska,suwinska,LMS}. Some problems regarding strongly regular sequences and other aspects quite related to the summability of formal solutions of functional equations have also been considered: the Stokes phenomenon in~\cite{michaliktkacz}, or the summability properties of the formal solutions of PDEs with coefficients whose growth is governed by a strongly regular sequence, in~\cite{lastramaleksanz2}.

So far, partial achievements have been accomplished regarding the summability of formal solutions of some families of generalized moment partial differential equations with variable coefficients. As a first step we cite~\cite{LMS2}, where the integral representation of functional moment derivatives allows to guarantee that the moment derivative of a summable function remains summable (see Theorem 3 and Corollary 1,~\cite{LMS2}). This fact is applied to obtain summability results for the Cauchy problem
\begin{equation}\label{e2intro}
\left\{ \begin{array}{rcl}
            \left(\partial_{m_1,t}^{\kappa}-a(z)\partial_{m_2,z}^{p}\right)u(t,z)&=&\hat{f}(t,z)\\
             \partial_{m_1,t}^{j}u(0,z)&=&\varphi_j(z),\quad j=0,\ldots,\kappa-1
             \end{array}
\right.,
\end{equation} 
where $a(z),\,\varphi_j(z)$ are holomorphic functions defined in a neighborhood of the origin, with $a(0)\neq 0$. This problem turns out to be the moment model of the main problem under study in~\cite{remy2016}, in the framework of moment partial differential equations. 

In the present study we give a further step in the theory of summability of generalized moment functional equations, by considering the moment integro-differential problem (\ref{eq:mainintro}), which can be considered as a moment interpretation of the integro-differential equations in~\cite{remy2017}.

The first main result, Theorem~\ref{teopral}, states that generalized summability of $\hat{f}(t,z)$  with respect to $t$ uniformly on $z$, together with the summability of a finite number of formal power series related to the formal solution (see condition (\ref{e498})) entail generalized summability of the formal solution of (\ref{eq:mainintro}). A more accurate result is also established in Theorem~\ref{coropral} under the more restrictive Assumption (A) regarding the strongly regular sequences involved. Assumption (A) can be read as a closeness condition of generalized summability with respect to certain moment integration. Under Assumption (A), condition (\ref{e498}) is satisfied, turning the result in Theorem~\ref{teopral} into an equivalence. The work ends with Section~\ref{secscopeA}, where two essential examples are displayed in which Assumption (A) holds. Indeed, these two situations are related to the case of classical integro-differential equations (see Proposition~\ref{prop732a}), and fractional integro-differential equations (see Proposition~\ref{prop732b}).

Some essential differences in the approach of both works are worth mentioning. On the one hand, the moment operators describing equation (\ref{eq:mainintro})  give more generality in the sense that the results obtained can be particularized into concrete problems regarding not only classical integro-differential equations but also integro-differential equations involving fractional derivatives. On the other hand, the coefficients considered in the present work remain constant in time, due to the absence of appropriate tools to deal with this concern in the moment settings. This deserves attention of a future investigation. 

Going back to the Cauchy problem (\ref{e2intro}), one can check that it can be transformed into a moment integro-differential equation of the form (\ref{eq:mainintro}), and therefore be seen as an example of equation for which the first main result of the present work applies.

The paper is structured as follows. The notation is fixed in Section~\ref{secnot}. Section~\ref{secpre} reviews the main concepts and properties related to strongly regular sequences, asymptotic expansions in a sectorial region of the complex plane associated with one of such sequences, and appropriate tools in order to achieve summability results in this framework. Section~\ref{secpre} concludes with the definition and the main properties of the Banach spaces of functions involved in the proof of the first main result. In section~\ref{secsum}, we first recall the construction of the Newton polygon and state the problem under study. Some technical parts in the proof of the first main result (Theorem~\ref{teopral}) are left to a final section, Section~\ref{secanexo}, for the sake of clarity of the reasoning. Assumption (A) leads to the second main result (Theorem~\ref{coropral}) and then to Section~\ref{secscopeA}, where the importance of Assumption (A) is put into light in several applications of the theory.

\section{Notation}\label{secnot}

$\mathbb{N}$ stands for the set of natural numbers $\{1,2,\cdots\}$ and $\mathbb{N}_0:=\mathbb{N}\cup\{0\}$. We write $\mathbb{Q}_+$ for the set of positive rational numbers.

The symbol $\left\lfloor \cdot \right\rfloor$ stands for the floor function.

We write $\mathcal{R}$ for the Riemann surface of the logarithm. 

For any $r>0$, $D(0,r)$ (resp. $\overline{D}(0,r)$) stands for the open (resp. closed) disc in the complex plane $\{z\in\C:|z|<r\}$ (resp. $\{z\in\C:|z|\le r\}$).

Let $\theta>0$ and $d\in\R$. $S_{d}(\theta)$ denotes the open infinite sector in $\mathcal{R}$ 
$$S_{d}(\theta):=\left\{z\in\mathcal{R}: |\hbox{arg}(z)-d|<\frac{\theta}{2}\right\}.$$
If the opening of the sector is unspecified we write $S_d$. A sectorial region $G_d(\theta)\subseteq\mathcal{R}$ is a set satisfying $G_d(\theta)\subseteq S_{d}(\theta)\cap D(0,r)$ for some $r>0$, and for all $0<\theta'<\theta$ there exists $0<r'<r$ with $(S_d(\theta')\cap D(0,r'))\subseteq G_d(\theta)$. We denote by $\hbox{arg}(S)$ the set of arguments of $S$, in particular $\hbox{arg}(S_{d}(\theta))=\left(d-\frac{\theta}{2},d+\frac{\theta}{2}\right)$.

We write $\hat{S}_{d}(\theta;r):=S_{d}(\theta)\cup D(0,r)$, and $\hat{S}_{d}(\theta)$ (resp. $\hat{S}_{d}$) if $r>0$ (resp. $r>0$ and $\theta>0$) are not specified. The symbol $S\prec S_d(\theta)$ describes an infinite sector $S$ with the vertex at the origin and $\overline{S}\subseteq S_d(\theta)$. Analogously, $\hat{S}\prec \hat{S}_d(\theta;r)$ means that $\hat{S}=S\cup D(0,r')$, with $S\prec S_{d}(\theta)$ and $0<r'<r$. Given two sectorial regions $G_d(\theta)$ and $G_{d'}(\theta')$, we write $G_d(\theta)\prec G_{d'}(\theta')$ in the case that the previous property holds for the sectors involved in the definition of the corresponding sectorial regions.

Given a complex Banach space $(\mathbb{E},\left\|\cdot\right\|_{\mathbb{E}})$, the set $\mathcal{O}(U,\mathbb{E})$ stands for the set of holomorphic functions in a set $U\subseteq\C$, with values in $\mathbb{E}$. If $\mathbb{E}=\C$, then we simply write $\mathcal{O}(U)$. We denote the formal power series with coefficients in $\mathbb{E}$ by $\mathbb{E}[[z]]$.


\section{Preliminary results and definitions}\label{secpre}

\subsection{Strongly regular sequences and related properties}\label{secsrs}

We first recall the concept of strongly regular sequence put forward by V. Thilliez~\cite{thilliez} together with some of the properties held by such sequences, which will be useful in the sequel. Let us consider an increasing sequence of non-negative real numbers
$\MM=\left(M_{p}\right)_{p\ge0}$, with $M_{0}=1$, such that: 
\begin{enumerate}
\item [$(lc)$] $M_{p}^{2}\leq M_{p-1}M_{p+1}$ for every $p\ge1$, 
\item [$(mg)$] $\exists_{A_{1}>0}:\quad M_{p+q}\le A_{1}^{p+q}M_{p}M_{q}$
for every $p,q\in\NN_{0}$,
\item [$(snq)$] $\exists_{A_{2}>0}:\quad\sum_{q\ge p}\frac{M_{q}}{(q+1)M_{q+1}}\le A_{2}\frac{M_{p}}{M_{p+1}}$
for every $p\in\NN_{0}$. 
\end{enumerate}
The notation $(lc)$ stands for logarithmically convex, $(mg)$ for moderate growth and $(snq)$ for strong non-quasianalyticity conditions, respectively.

\begin{defin}
Any sequence of positive real numbers $\MM=\left(M_{p}\right)_{p\ge0}$, with $M_{0}=1$ under the properties $(lc)$, $(mg)$ and $(snq)$ is known as a \textit{strongly regular sequence}.
\end{defin}

The occurrence of strongly regular sequences has had a predominant role in the study of formal solutions of functional equations and their summability. The most outstanding example of a strongly regular sequence is the Gevrey sequence of order $\alpha>0$, defined by $\MM_{\alpha}=(p!^{\alpha})_{p\ge0}$, playing a crucial role in the study of summability of ordinary and partial differential equations. Another classical example, which is a generalization of the previous one,  is that of the sequence defined by $\MM_{\alpha,\beta}=(p!^{\alpha}\prod_{m=0}^p\log^{\beta}(e+m))_{p\ge 0}$, for fixed $\alpha>0$ and $\beta\in\R$. These sequences appear in the study of formal solutions of difference equations for $\alpha=1$ and $\beta=-1$, in the so-called $1+$ level~\cite{immink,immink2}

The following statements are a direct consequence of $\MM$ being an $(lc)$ sequence:

\begin{lemma}\label{lem:decreasing}
The sequence defined by $\left(\frac{M_{p}}{M_{p+1}}\right)_{p\ge0}$ is monotone decreasing.
\end{lemma}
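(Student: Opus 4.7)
The statement is essentially an immediate reformulation of $(lc)$, so the plan is very short. I would rewrite the logarithmic convexity inequality $M_p^2 \le M_{p-1}M_{p+1}$, valid for every $p\ge 1$, by dividing both sides by the positive quantity $M_p M_{p+1}$ (note that strong regularity forces all $M_p>0$, since $M_0=1$ and the sequence is increasing). This yields
\[
\frac{M_p}{M_{p+1}} \le \frac{M_{p-1}}{M_p} \qquad \text{for every } p\ge 1,
\]
which is exactly the statement that the sequence $\left(\frac{M_p}{M_{p+1}}\right)_{p\ge 0}$ is monotone decreasing.

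The only point that requires any care is justifying positivity of the $M_p$ in order to divide safely, but this is automatic from $M_0=1$ together with the $(lc)$ inequality (or equivalently from the increasing assumption). There is no genuine obstacle here; the content of the lemma is just a cosmetic rearrangement of $(lc)$, placed in the text so it can be cited conveniently later.
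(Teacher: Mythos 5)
Your proof is correct and is precisely the argument the paper has in mind: the lemma is stated there as a ``direct consequence'' of $(lc)$ with no written proof, and dividing $M_p^2\le M_{p-1}M_{p+1}$ by $M_pM_{p+1}>0$ is the intended one-line justification. Nothing to add.
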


\begin{lemma}\label{lem:prodM}
 For every $p,\,q\in\NN_0$ we have $M_{p}M_{q}\le M_{p+q}$.
\end{lemma}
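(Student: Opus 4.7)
The plan is to derive the inequality $M_p M_q \le M_{p+q}$ from the logarithmic convexity property $(lc)$ together with the normalization $M_0 = 1$, via a telescoping argument on the consecutive ratios $r_k := M_{k+1}/M_k$.

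First, I would dispose of the trivial case: if either $p = 0$ or $q = 0$, the inequality reduces to $M_p \le M_p$ (or the analogous statement) because $M_0 = 1$, and there is nothing to show. So assume $p, q \ge 1$.

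Next, I would recall that $(lc)$ can be rewritten as $M_p / M_{p-1} \le M_{p+1}/M_p$ for every $p \ge 1$, which means the sequence of ratios $r_k = M_{k+1}/M_k$ is monotone \emph{increasing} (this is dual to Lemma~\ref{lem:decreasing}, which records the decrease of the reciprocal ratios). Writing both sides of the desired inequality as telescoping products one has
\begin{equation*}
\frac{M_{p+q}}{M_q} \;=\; \prod_{j=0}^{p-1} \frac{M_{q+j+1}}{M_{q+j}} \;=\; \prod_{j=0}^{p-1} r_{q+j},
\qquad
\frac{M_p}{M_0} \;=\; \prod_{j=0}^{p-1} \frac{M_{j+1}}{M_{j}} \;=\; \prod_{j=0}^{p-1} r_{j}.
\end{equation*}
Since $r_{q+j} \ge r_j$ for every $j = 0, \ldots, p-1$ by the monotonicity of $(r_k)$, multiplying these inequalities termwise gives $M_{p+q}/M_q \ge M_p/M_0 = M_p$, which rearranges to the claim.

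There is no real obstacle in this argument; the only subtlety worth mentioning is that $(lc)$ is stated only for $p \ge 1$, so the comparison $r_j \le r_{q+j}$ should be invoked for $j \ge 0$ and $q \ge 1$, which is exactly the range used above. No appeal to $(mg)$, $(snq)$, or to the monotonicity of $\mathbb{M}$ itself is required beyond the normalization $M_0 = 1$.
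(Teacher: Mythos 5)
Your proof is correct: the telescoping comparison of the increasing ratios $r_k=M_{k+1}/M_k$ (which is precisely the content of $(lc)$), combined with the normalization $M_0=1$, is the standard argument, and it is exactly what the paper intends when it records this lemma without proof as ``a direct consequence of $\mathbb{M}$ being an $(lc)$ sequence.'' The handling of the trivial cases $p=0$ or $q=0$ and the positivity of the terms (guaranteed since the sequence increases from $M_0=1$) are both in order.
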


As a direct consequence of the previous property one derives monotonicity of $\MM$, after an innocuous modification of $M_1$, which does not vary the asymptotic behavior on the growth of the elements of the initial sequence.

\begin{lemma}
 For any $n,p,q\in\NN_0,\,q\neq 0$ we have
 $$
 \frac{M_{n+p}}{M_{n+p+q}}\le\frac{M_{p}}{M_{p+q}}.
 $$
\end{lemma}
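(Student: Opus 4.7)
The plan is to reduce the inequality to a telescoping product of consecutive ratios $M_k/M_{k+1}$ and then invoke Lemma~\ref{lem:decreasing} (monotone decrease of $(M_p/M_{p+1})_{p\ge 0}$) termwise.

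Concretely, since $q\ge 1$, I would write the two sides as finite products,
\begin{equation*}
\frac{M_{n+p}}{M_{n+p+q}} \;=\; \prod_{j=0}^{q-1}\frac{M_{n+p+j}}{M_{n+p+j+1}}, \qquad \frac{M_{p}}{M_{p+q}} \;=\; \prod_{j=0}^{q-1}\frac{M_{p+j}}{M_{p+j+1}},
\end{equation*}
so that the desired inequality becomes a factor-by-factor comparison. For each $j\in\{0,\ldots,q-1\}$, one has $n+p+j\ge p+j$ because $n\in\NN_0$, hence by Lemma~\ref{lem:decreasing}
\begin{equation*}
\frac{M_{n+p+j}}{M_{n+p+j+1}} \;\le\; \frac{M_{p+j}}{M_{p+j+1}}.
\end{equation*}
Multiplying these $q$ inequalities (all quantities are positive, since $\MM$ consists of positive numbers) yields the claim.

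There is no substantial obstacle: the only ingredient beyond routine manipulation is the monotonicity statement of Lemma~\ref{lem:decreasing}, which follows immediately from $(lc)$. The positivity required to preserve the inequality under multiplication is guaranteed by the fact, already noted after Lemma~\ref{lem:prodM}, that after an innocuous modification of $M_1$ the sequence $\MM$ is strictly positive and monotone. If one prefers to avoid the product form, the same proof can be phrased inductively on $q$: the case $q=1$ is exactly Lemma~\ref{lem:decreasing} applied at indices $p$ and $n+p$, and the inductive step follows from $\frac{M_{n+p}}{M_{n+p+q+1}}=\frac{M_{n+p}}{M_{n+p+q}}\cdot\frac{M_{n+p+q}}{M_{n+p+q+1}}$, bounding the first factor by induction and the second again by Lemma~\ref{lem:decreasing}.
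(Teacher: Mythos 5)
Your argument is correct and is essentially identical to the paper's proof: both write $\frac{M_{n+p}}{M_{n+p+q}}$ as the telescoping product of consecutive ratios and bound each factor by the corresponding one in $\frac{M_p}{M_{p+q}}$ using Lemma~\ref{lem:decreasing}. Nothing further is needed.
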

\begin{proof}
 Let us notice that
 $$
 \frac{M_{n+p}}{M_{n+p+q}}=\frac{M_{n+p}}{M_{n+p+1}}\cdot\frac{M_{n+p+1}}{M_{n+p+2}}\cdot\ldots\cdot\frac{M_{n+p+q-1}}{M_{n+p+q}}.
 $$
 From Lemma \ref{lem:decreasing} it follows that
 $$
 \frac{M_{n+p}}{M_{n+p+q}}\le\frac{M_{p}}{M_{p+1}}\cdot\frac{M_{p+1}}{M_{p+2}}\cdot\ldots\cdot\frac{M_{p+q-1}}{M_{p+q}}=\frac{M_p}{M_{p+q}}.
 $$
\end{proof}

\begin{lemma}\label{lem:floor_quotient}

Let $p,q\in\NN$. Then: 
\begin{enumerate}
\item There exist constants $C_{1},D_{1}>0$ such that 
\[
M_{\left\lfloor \nicefrac{np}{q}\right\rfloor }\leq C_{1}D_{1}^{n}M_{n}^{\nicefrac{p}{q}}\textrm{ for every }n\in\NN_{0}.
\]
\item There exist constants $C_{2},D_{2}>0$ such that 
\[
M_{n}^{\nicefrac{p}{q}}\leq C_{2}D_{2}^{n}M_{\left\lfloor \nicefrac{np}{q}\right\rfloor }\textrm{ for every }n\in\NN_{0}.
\]
\end{enumerate}
\end{lemma}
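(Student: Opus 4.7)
The plan is to prove both inequalities by combining the two opposite-direction comparisons at our disposal: the moderate growth condition $(mg)$, which gives upper bounds of the form $M_{a+b}\le A_1^{a+b}M_aM_b$, and Lemma~\ref{lem:prodM}, which iterates to $M_a^k\le M_{ka}$. The basic trick is to raise one side to the power $q$ (or, at the end, to take the $q$-th root) so that $np/q$ and $n$ are replaced by honest integers $np$ and $qN$, after which monotonicity of $\MM$ and $(mg)$ can be applied freely.

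For part (1), set $N=\lfloor np/q\rfloor$, so $qN\le np$. By Lemma~\ref{lem:prodM} iterated, $M_N^q\le M_{qN}$, and by monotonicity of $\MM$ (available after the innocuous modification of $M_1$ recorded in the excerpt), $M_{qN}\le M_{np}$. Iterating $(mg)$ in the form $M_{(k+1)n}\le A_1^{(k+1)n}M_nM_{kn}$ gives a bound $M_{np}\le A_1^{c_p n}M_n^p$ with an explicit constant $c_p$ of order $p^2$ (the standard telescoping $c_k=k+c_{k-1}$). Combining and taking the $q$-th root yields $M_N\le A_1^{c_p n/q}M_n^{p/q}$, so $C_1=1$ and $D_1=A_1^{c_p/q}$ do the job.

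For part (2), again let $N=\lfloor np/q\rfloor$; now $np\le qN+q$. Start from $M_n^p\le M_{np}$ (Lemma~\ref{lem:prodM}) and pass to $M_{np}\le M_{qN+q}$ by monotonicity. Apply $(mg)$ twice: first $M_{qN+q}\le A_1^{qN+q}M_{qN}M_q$, then iterate as in part (1) to obtain $M_{qN}\le A_1^{c_q N}M_N^q$ with $c_q$ of order $q^2$. Multiplying and taking the $q$-th root of
\[
M_n^p\le A_1^q M_q\cdot A_1^{(c_q+q)N}\,M_N^q
\]
produces $M_n^{p/q}\le (A_1^q M_q)^{1/q}\,A_1^{(c_q+q)N/q}\,M_N$; since $N\le np/q$, the exponential in $N$ absorbs into an exponential in $n$, giving $C_2=(A_1^qM_q)^{1/q}$ and $D_2=A_1^{(c_q+q)p/q^2}$.

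There is no real obstacle here beyond bookkeeping: the statement is a routine interpolation-type consequence of $(mg)$ plus $M_a M_b\le M_{a+b}$. The mildly delicate point is only making sure that the $q$-th root procedure is legitimate, which is why raising everything to an integer power first is useful, and that the correction term in the floor function ($np\le qN+q$, not $np\le qN$) is absorbed cleanly into the multiplicative constants in part (2).
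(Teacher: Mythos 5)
Your proposal is correct and follows essentially the same route as the paper: both directions rest on the chain $M_N^q\le M_{qN}\le M_{np}$ (via Lemma~\ref{lem:prodM} and monotonicity) in one direction and iterated $(mg)$ in the other, followed by taking $q$-th roots, with your use of $np\le qN+q$ in part (2) being a trivial variant of the paper's exact Euclidean division $np=\lfloor np/q\rfloor q+j$. Your bookkeeping of the iterated $(mg)$ constant as being of order $p^2$ in the exponent is in fact slightly more careful than the paper's stated $M_{np}\le A_1^{np}M_n^p$, but this affects only the value of $D_1,D_2$, not the statement.
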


\begin{proof}
From Lemma~\ref{lem:prodM}, and monotonicity of $\MM$ one gets
$$M_{\left\lfloor \nicefrac{np}{q}\right\rfloor }\stackrel{(q)}{\cdots}M_{\left\lfloor \nicefrac{np}{q}\right\rfloor }\le M_{\left\lfloor \nicefrac{np}{q}\right\rfloor q}\le M_{np}.$$
The first statement follows directly from here and the fact that $M_{np}\le A_1^{np}M_n^p$ . In order to give proof for the second statement, we make use of $(mg)$ condition together with Euclidean division which guarantees the existence of $j\in\{0,\ldots,q-1\}$ such that 
$$M_n\stackrel{(p)}{\cdots}M_n\le M_{np}=M_{\left\lfloor \nicefrac{np}{q}\right\rfloor q+j}\le M_jA_1^{np}M_{\left\lfloor \nicefrac{np}{q}\right\rfloor q}\le M_jA_1^{np+\left\lfloor \nicefrac{np}{q}\right\rfloor q}M_{\left\lfloor \nicefrac{np}{q}\right\rfloor }^q.$$
\end{proof}

In addition to this, we consider sequences $\MM=(M_p)_{p\ge 0}$ under the following condition:
\begin{equation}\label{eq:seq_property}
\forall_{d\in\NN}\,\exists_{C_{3}(d)>0}\,\forall_{n\in\NN}\quad\frac{M_{dn}}{M_{dn-1}}\leq C_{3}(d)\frac{M_{n}}{M_{n-1}}.
\end{equation}

\begin{remark}
Gevrey and $1+$ sequences satisfy the property given by \eqref{eq:seq_property}. Observe that for fixed $\alpha>0$ and $\beta\in\R$, the property in \eqref{eq:seq_property} holds for $C_3(d)=d^{\alpha}$ (resp. $C_3(d)=d^{\alpha}(1+\log(d))^{\beta}$) in the case of $\MM_{\alpha}$ (resp. $\MM_{\alpha,\beta}$).
In the latter, notice that
$$\left(\frac{\log(e+dn)}{\log(e+n)}\right)^{\beta}=\left(\frac{\log(e/d+n)+\log(d)}{\log(e+n)}\right)^{\beta}\le\left(\frac{\log(e+n)+\log(d)}{\log(e+n)}\right)^{\beta}\le (1+\log(d))^{\beta},$$
for all $n\in\NN$.
\end{remark}

\begin{defin} Let us consider a sequence $\left(M_{p}\right)_{p\ge0}$
satisfying properties listed above and let $s\in\RR.$ We call $\left(m(p)\right)_{p\ge0}$
\emph{an $M_{p}$-sequence of order $s$} if there exist positive
constants $A_{3},A_{4}$ such that 
\[
A_{3}\left(M_{p}\right)^{s}\le m(p)\le A_{4}\left(M_{p}\right)^{s}\textrm{ for every }p\in\NN_{0}.
\]
\end{defin}

\begin{defin} We call $\left(m(p)\right)_{p\ge0}$ \emph{a regular
$M_{p}$-sequence of order $s$} if it is an $M_{p}$-sequence and
moreover 
\[
A_{3}\left(\frac{M_{p}}{M_{p-1}}\right)^{s}\le \frac{m(p)}{m(p-1)}\le A_{4}\left(\frac{M_{p}}{M_{p-1}}\right)^{s}\textrm{ for every }p\in\NN.
\]

\end{defin}


$M_p$ sequences (resp. regular $M_p$ sequences) of fixed order have been previously considered by the authors in~\cite{LMS,LMS2} (resp.~\cite{LMS}) in the study of the estimates related to the formal solutions of moment partial differential equations, and the summability of such formal solutions. Examples of such sequences are provided in these previous works.

For any strongly regular sequence $\mathbb{M}=(M_p)_{p\ge0}$, one defines the function 
$$M(t):=\sup_{p\ge0}\log\left(\frac{t^p}{M_p}\right),$$ 
for $t>0$ and $M(0)=0$. The expression
$$\omega(\mathbb{M}):=\left(\limsup_{r\to\infty}\max\left\{0,\frac{\log(M(r))}{\log(r)}\right\}\right)^{-1},$$
defines a positive real number.

We remark that, given any positive number $s$ and a strongly regular sequence $\mathbb{M}=(M_p)_{p\ge0}$, the sequence $\overline{\mathbb{M}}=(M_p^s)_{p\ge0}$ remains a strongly regular sequence, with $\omega(\overline{\mathbb{M}})=s\omega(\mathbb{M})$.

\subsection{Generalized summability}

In this subsection, we recall the main points on the theory of generalized summability for the sake of completeness. We refer to~\cite{sanzproceedings,LMS2} and the references therein for further details on the topic. In the whole subsection, $\left(\EE,\|\cdot\|_{\EE}\right)$ denotes a fixed complex Banach space.

\begin{defin}
Let $\mathbb{M}=(M_p)_{p\ge0}$ be a strongly regular sequence, $\theta>0$ and $d\in\R$. Let $G_d(\theta)$ be a sectorial region and let $u(z)\in\mathcal{O}(G_d(\theta),\mathbb{E})$. We say that $u$ admits the formal power series $\hat{u}(z)=\sum_{p\ge0}a_pz^p\in\mathbb{E}[[z]]$ as its $\mathbb{M}$\textit{-asymptotic expansion} in $G_d(\theta)$ (at the origin) if for every $0<\theta'<\theta$, $S_d(\theta';r)\subseteq G_d(\theta)$ and every integer $N\ge 1$, there exist $C,A>0$ such that
$$\left\|u(z)-\sum_{p=0}^{N-1}a_pz^p\right\|_{\mathbb{E}}\le CA^NM_N|z|^{N},\quad z\in S_d(\theta';r).$$
\end{defin}

\begin{defin}\label{def249}
Let $\mathbb{M}=(M_p)_{p\ge0}$ be a strongly regular sequence with $\omega(\mathbb{M})>0$ and let $\hat{u}(z)=\sum_{p\ge0}a_pz^p\in\mathbb{E}[[z]]$ be a formal power series. We say that $\hat{u}$ is \textit{$\mathbb{M}$-summable} along direction $d\in\R$ if there exists $\theta>\omega(\mathbb{M})$, a sectorial region $G_d(\theta)$ and $u(z)\in\mathcal{O}(G_d(\theta),\mathbb{E})$  such that $u$ admits $\hat{u}$ as its $\mathbb{M}$-asymptotic expansion in $G_d(\theta)$.
\end{defin}

\begin{lemma}[Watson's lemma]
Under the assumptions of the previous definition, the function $u(z)\in\mathcal{O}(G_d(\theta),\mathbb{E})$ turns out to be unique, and it is known as the \textit{$\mathbb{M}$-sum} of $u$ along direction $d$. 
\end{lemma}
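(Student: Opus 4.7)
The plan is to reduce the uniqueness assertion to a flatness principle. Given two candidates $u_1,u_2$ that both admit $\hat{u}$ as $\MM$-asymptotic expansion in sectorial regions of opening exceeding $\omega(\MM)$, one may intersect those regions to obtain a common $G_d(\theta)$ with $\theta>\omega(\MM)$ on which $v:=u_1-u_2$ is holomorphic and admits the null series as $\MM$-asymptotic expansion. Hence it suffices to prove that any $\MM$-flat function on such a sectorial region vanishes identically.

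First I would convert the asymptotic bounds into a pointwise decay estimate near the origin. For any subsector $S_d(\theta';r)\subseteq G_d(\theta)$ with $\theta'<\theta$, the hypothesis provides $C,A>0$ such that
$$\|v(z)\|_{\EE}\le CA^{N}M_{N}|z|^{N}\quad\text{for every }N\ge 1\text{ and every }z\in S_d(\theta';r).$$
Taking the infimum of the right-hand side over $N$ converts this into a bound of the form $\|v(z)\|_{\EE}\le C\exp\bigl(-M(1/(A|z|))\bigr)$, where $M(t)=\sup_{p\ge 0}\log(t^p/M_p)$ is the auxiliary function introduced in Section~\ref{secsrs}.

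Next I would exploit the definition of $\omega(\MM)$ together with the strict inequality $\theta>\omega(\MM)$. By construction, $\omega(\MM)^{-1}$ controls the upper order of growth of $M(t)$, and the margin $\theta-\omega(\MM)>0$ allows one to choose an exponent $\rho$ so that, on the one hand, $M(t)\ge c\,t^{\rho}$ for all sufficiently large $t$, and, on the other hand, $\rho$ is large enough that a power change of variable $\zeta=z^{-\rho}$ maps a suitable subsector $S_d(\theta')$ biholomorphically onto a domain containing a half-plane. On this half-plane the transformed function is bounded, continuous up to the boundary and decays exponentially, so a classical Phragm\'en--Lindel\"of argument forces it, and therefore $v$, to vanish identically.

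The principal obstacle is the Phragm\'en--Lindel\"of step in the general strongly regular setting. Unlike the Gevrey case $M_p=(p!)^{\alpha}$, where $M(t)\sim\alpha\,t^{1/\alpha}$ is a clean power, the general $M(t)$ only enjoys growth control through the $\limsup$ appearing in the definition of $\omega(\MM)$. The strict inequality $\theta>\omega(\MM)$ is precisely what enables the introduction of a margin so that $M(t)$ dominates a pure power on the range of $1/|z|$ that is relevant for the Phragm\'en--Lindel\"of estimate; once this reduction is in place, the argument proceeds along the lines of the Watson-type theorems for ultraholomorphic classes developed in~\cite{sanzproceedings}.
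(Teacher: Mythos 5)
The paper offers no proof of this lemma: it is quoted as a known result of the theory of generalized summability (it is the Watson-type theorem of~\cite{lastramaleksanz,sanzproceedings}), so your attempt can only be compared with the standard argument in those references. Your overall strategy is indeed that standard route: reduce uniqueness to the vanishing of an $\MM$-flat function on a sectorial region of opening exceeding $\pi\omega(\MM)$, convert the asymptotic bounds into the decay $\|v(z)\|_{\EE}\le C\exp\bigl(-M(1/(A|z|))\bigr)$, ramify, and conclude by a Phragm\'en--Lindel\"of-type estimate. The reduction to flatness and the passage to the decay estimate via the infimum over $N$ are both correct.

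The genuine gap is in the step where you assert that $M(t)\ge c\,t^{\rho}$ \emph{for all sufficiently large} $t$, with $\rho$ close enough to $1/\omega(\MM)$, as a consequence of the definition of $\omega(\MM)$ and of the margin in the opening. Since $\omega(\MM)$ is defined through a $\limsup$, the definition only yields the eventual upper bound $M(r)\le r^{1/\omega(\MM)+\epsilon}$ together with a lower bound $M(r_n)\ge r_n^{1/\omega(\MM)-\epsilon}$ along some sequence $r_n\to\infty$; monotonicity of $M$ does not upgrade this to a uniform lower bound with an exponent near $1/\omega(\MM)$, and a bound holding only on a sequence of circles $|z|=1/(Ar_n)$ is useless for the Phragm\'en--Lindel\"of step, which needs decay along rays tending to the vertex. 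What every strongly regular sequence does satisfy, as a consequence of $(mg)$, is $M(t)\ge c\,t^{\rho_0}$ for \emph{some} $\rho_0>0$, but $\rho_0$ may be strictly smaller than $1/\omega(\MM)$, in which case your argument only proves uniqueness for openings larger than $\pi/\rho_0$, a strictly weaker statement. The missing ingredient is the standing hypothesis, imposed in the paper only after this lemma, that $\MM$ admits a nonzero proximate order: this forces $\log M(r)/\log r\to 1/\omega(\MM)$, hence gives $M(t)\ge c_\epsilon\,t^{1/\omega(\MM)-\epsilon}$ for all large $t$ and every $\epsilon>0$, after which your ramification and Phragm\'en--Lindel\"of step do close for any opening exceeding $\pi\omega(\MM)$. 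You should invoke that assumption explicitly (or argue via the quasianalyticity characterization by the logarithmic integral, as in the cited references), since for a general strongly regular sequence the upper and lower orders of $M$ need not coincide.
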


We now recall an equivalent definition of $\mathbb{M}$-summability. For that purpose, we first need to describe some formal and analytic operators.

The exponential growth at infinity is generalized from Gevrey settings to strongly regular sequences in terms of the function $M$.

\begin{defin}
Let $\mathbb{M}$ be a sequence of positive real numbers, $r,\theta>0$ and $d\in\R$. We write $\mathcal{O}^{\mathbb{M}}(\hat{S}_d(\theta;r),\mathbb{E})$ for the set of holomorphic functions $f$ defined in $\hat{S}_d(\theta;r)$ with values in $\mathbb{E}$ such that for all $0<\theta'<\theta$ and $0<r'<r$, there exist $\tilde{c},\tilde{k}>0$ such that
$$\left\|f(z)\right\|_{\mathbb{E}}\le\tilde{c}\exp\left(M(|z|/\tilde{k})\right),\quad z\in\hat{S}_d(\theta',r').$$
An analogous definition can be stated by substituting the set $\hat{S}_d(\theta',r')$ by $S_d(\theta')$.
\end{defin}

The methods of generalized summability lean on the existence of kernel functions under suitable properties that fit certain asymptotic necessities (see~\cite{jjs17,lastramaleksanz}). The following definition can be adapted to $\omega(\mathbb{M})\ge 2$ after an appropriate ramification. However, we remain in the case $\omega(\mathbb{M})<2$ for practical purposes.

\begin{defin}
Let $\mathbb{M}$ be a strongly regular sequence with $\omega(\mathbb{M})<2$. The functions $e,E$ define a pair of kernel functions for $\mathbb{M}$-summability under these properties:
\begin{itemize}
\item $e\in\mathcal{O}(S_0(\omega(\mathbb{M})\pi))$. Moreover, $e(z)/z$ is locally uniformly integrable at 0, i.e., there exists $t_0>0$, and for all $z_0\in S_{0}(\omega(\mathbb{M})\pi)$ there exists an open set $U$ such that $z_0\in U\subseteq S_0(\omega(\mathbb{M})\pi)$, with 
$$\int_0^{t_0}\frac{\sup_{z\in U}\left|e\left(\frac{t}{z}\right)\right|}{t}dt<\infty.$$
In addition to this, for all $\epsilon>0$ there exist $c, k>0$ such that
$$|e(z)|\le c\exp\left(-M\left(\frac{|z|}{k}\right)\right)\quad \hbox{ for all }z\in S_0(\omega(\mathbb{M})\pi-\epsilon),$$
and also $e(x)\in\R$ for all $x>0$.
\item $E\in\mathcal{O}(\C)$ and there exist $\tilde{c},\,\tilde{k}>0$ such that $\displaystyle |E(z)|\le \tilde{c}\exp\left(M\left(\frac{|z|}{\tilde{k}}\right)\right)$ for $z\in \C$. There exists $\beta>0$ such that for all $0<\tilde{\theta}<2\pi-\omega(\mathbb{M})\pi$ and $M_E>0$, there exists $\tilde{c}_2>0$ with 
$$|E(z)|\le \frac{\tilde{c}_2}{|z|^{\beta}},\quad z\in S_\pi(\tilde{\theta})\setminus D(0,M_E).$$
\item The \textit{moment function} associated with $e$, defined by
$$m_{e}(z):=\int_{0}^{\infty} t^{z-1}e(t)dt$$
is holomorphic in $\{z\in\C:\hbox{Re}(z)\ge 0\}$. The analytic expression of $E$ is given by
$$E(z)=\sum_{p\ge0}\frac{z^p}{m_{e}(p)},\quad z\in\C.$$
\end{itemize}
\end{defin}

Hereinafter, the sequence of moments defined from a moment function $m_e$ by $(m_e(p))_{p\ge 0}$ will be assumed to emerge from certain fixed pair of kernel functions that will be omitted in the sequel. In addition to this, we assume that the strongly regular sequence $\mathbb{M}$ in the previous construction admits a nonzero proximate order (see~\cite{sanzproceedings,lastramaleksanz}), which is not a restrictive assumption in practice, and moreover entails the existence of a pair of kernel functions for $\mathbb{M}$-summability.  We also recall that the sequence of moments $m_e:=(m_e(p))_{p\ge0}$ associated with a strongly regular sequence $\mathbb{M}$ and a pair of kernel functions is indeed a strongly regular sequence, with $\omega(\mathbb{M})=\omega(m_e)$. It is also worth remarking that for any positive $s$ the sequence $\overline{\mathbb{M}}=(M_p^s)_{p\ge0}$ is a strongly regular sequence which admits a pair of kernel functions for $\overline{\mathbb{M}}$-summability as long as $\overline{\mathbb{M}}=(M_p)_{p\ge0}$ is a strongly regular sequence which admits a pair of kernel functions for $\mathbb{M}$-summability.

\begin{defin}
 Given a sequence of moments $m_e=(m_e(p))_{p\ge0}$, the formal $m_e$-moment Borel transformation is defined by
$$\hat{\mathcal{B}}_{m_e,z}\left(\sum_{p\ge0}a_pz^p\right)=\sum_{p\ge0}\frac{a_p}{m_e(p)}z^p,$$ 
on $\mathbb{E}[[z]]$.
\end{defin}

The equivalent definition to Definition~\ref{def249} (see Theorem 6.18,~\cite{sanzproceedings}) is the following.

\begin{defin}
Let $\mathbb{M}$ be a strongly regular sequence which admits a nonzero proximate order. Let $m_e$ be a sequence of moments associated with $\mathbb{M}$. The series $\hat{u}\in\mathbb{E}[[z]]$ is $\mathbb{M}$-summable along direction $d\in\R$ if $\hat{\mathcal{B}}_{m_e,z}(\hat{u}(z))$ is a series with a positive radius of convergence, and the analytic function defining such series, say $u(z)$, can be extended to an infinite sector of bisecting direction $d$, say $\hat{S}_d$, with $u(z)\in\mathcal{O}^{\mathbb{M}}(\hat{S}_d,\mathbb{E})$.
\end{defin}

\begin{prop}[Proposition 6.20,~\cite{sanzproceedings} and Corollary 1,~\cite{LMS2}]\label{prop1}
The set of $\mathbb{M}$-summable series along direction $d\in\R$ is a differential algebra, which is additionally closed under $m_e$-differentiation.
\end{prop}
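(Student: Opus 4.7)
The plan is to recognize that the statement assembles two results already in the literature: the differential-algebra property with respect to standard differentiation comes from Proposition 6.20 of \cite{sanzproceedings}, while closure under $\partial_{m_e,z}$ is the content of Corollary 1 of \cite{LMS2}. The proof therefore reduces to articulating these two facts, and I would structure it by verifying, in turn, the vector-space closure, the product closure, closure under $\partial_z$, and finally closure under the moment derivative.

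For the vector-space and scalar-multiplication part, I would work with the equivalent characterization via the $m_e$-moment Borel transform: if $\hat{u}$ and $\hat{v}$ are $\mathbb{M}$-summable along $d$, then $\hat{\mathcal{B}}_{m_e,z}(\hat{u})$ and $\hat{\mathcal{B}}_{m_e,z}(\hat{v})$ extend from a common disc to a common infinite sector $\hat{S}_d$ with values in $\mathcal{O}^{\mathbb{M}}(\hat{S}_d,\mathbb{E})$, and their linear combinations trivially inherit the same property. Closure under $\partial_z$ is then argued by applying Cauchy's integral formula on a slightly shrunk sector to the remainder $u(z)-\sum_{p<N}a_pz^p$, and absorbing the factor $M_{N+1}/M_N\le A_1^{N+1}M_1$ coming from the $(mg)$ condition into a new constant $A'$; a shift of index $N'=N-1$ then recasts the resulting bound in the form required by the definition of $\mathbb{M}$-asymptotic expansion applied to $\partial_z\hat{u}$.

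The product closure is the most substantial point. Keeping to the asymptotic-expansion formulation, one splits
$$u(z)v(z)-\sum_{p<N}c_pz^p$$
into contributions involving the remainders $u-T_Nu$ and $v-T_Nv$ together with a leftover from the Cauchy product of the truncations, and bounds each using the log-convexity-based inequality $M_kM_{N-k}\le M_N$ provided by Lemma~\ref{lem:prodM}. Summing over $k$ produces a factor $N+1$ which is absorbed into a geometric $2^N$, and one ends up with an estimate of the form $C'A'^NM_N|z|^N$ for the product, confirming that $\hat{u}\hat{v}$ is $\mathbb{M}$-summable along $d$ on the intersection of the two sectorial regions where the individual sums live.

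For closure under $\partial_{m_e,z}$ I would invoke Corollary 1 of \cite{LMS2}, whose key structural input is an integral representation of the moment derivative against a kernel preserving the class $\mathcal{O}^{\mathbb{M}}$ on each infinite sector; from this, the $m_e$-derivative of the $\mathbb{M}$-sum is again the $\mathbb{M}$-sum of $\partial_{m_e,z}\hat{u}$ on the same sector, giving the final claim. The main obstacle I anticipate is the bookkeeping in the product step: even with the clean inequality $M_kM_{N-k}\le M_N$, extracting the precise form $CA^NM_N|z|^N$ of the remainder without losing track of the constants and of the Cauchy-product leftover requires care, which is why the cited Proposition 6.20 of \cite{sanzproceedings} prefers to argue through the Borel-plane convolution, where the estimates are significantly more compact.
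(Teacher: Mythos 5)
First, a point of context: the paper offers no proof of this proposition at all --- it is imported as Proposition 6.20 of \cite{sanzproceedings} together with Corollary 1 of \cite{LMS2} --- so there is no internal argument to compare against. Your account of the linear structure via the Borel-transform characterization, of closure under $\partial_z$ via Cauchy's formula on a slightly shrunk sector with the $(mg)$-absorption of $M_{N+1}/M_N$, and of closure under $\partial_{m_e,z}$ via the integral representation behind Corollary 1 of \cite{LMS2} is a faithful sketch of how the cited results are actually proved.

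The product step, however, has a genuine gap as you have set it up. If you truncate \emph{both} factors at order $N$ and decompose $uv-\sum_{p<N}c_pz^p$ into remainder terms plus a ``leftover from the Cauchy product of the truncations,'' that leftover is $\sum_{p=N}^{2N-2}\bigl(\sum_{k}a_kb_{p-k}\bigr)z^p$, whose $p$-th coefficient is of size $(p+1)C^2A^pM_p$ after $M_kM_{p-k}\le M_p$. To fit the required form $C'A'^NM_N|z|^N$ you would need $M_p\,r^{p-N}\lesssim A'^NM_N$ for $N\le p\le 2N-2$; by $(mg)$ the best available is $M_p\le A_1^pM_NM_{p-N}$, and the surviving factor $M_{p-N}$ (as large as $M_{N-2}$) outgrows every geometric sequence because $M_N^{1/N}\to\infty$ for a strongly regular sequence, so no fixed radius $r$ compensates. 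The standard repair is to stagger the truncation orders:
\[
uv-\sum_{p<N}c_pz^p=\Bigl(u-\sum_{k<N}a_kz^k\Bigr)v+\sum_{k<N}a_kz^k\Bigl(v-\sum_{j<N-k}b_jz^j\Bigr),
\]
which reproduces $\sum_{p<N}c_pz^p$ exactly and leaves no leftover; each summand is then bounded by $C^2A^NM_kM_{N-k}|z|^N\le C^2A^NM_N|z|^N$ using the coefficient estimates $\|a_k\|_{\mathbb{E}}\le CA^kM_k$ (obtained by comparing consecutive remainders and letting $z\to0$), and the factor $N$ from the sum over $k$ is absorbed into $2^N$ as you intended. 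With this decomposition in place of the one you describe, the rest of your outline goes through.
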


The use of an $\mathbb{M}$-analog to Laplace transform allows to describe $\mathbb{M}$-summability algorithmically.

\begin{prop}[see Section 6,~\cite{sanzproceedings}]\label{prop316}
Let $d\in\R$ and let $e,\,E$ be a pair of kernel functions for $\mathbb{M}$-summability. Let $\theta>0$. For every $f\in\mathcal{O}^{\mathbb{M}}(S_d(\theta),\mathbb{E})$, we define the $e$-\emph{Laplace transform} of $f$ along a direction $\tau\in\hbox{arg}(S_d(\theta))$ by
$$(T_{e,\tau}f)(z)=\int_0^{\infty(\tau)}e(u/z)f(u)\frac{du}{u},$$
for $|\hbox{arg}(z)-\tau|<\omega(\mathbb{M})\pi/2$, and $|z|$ small enough. One may vary $\tau\in\hbox{arg}(S_d)$  in order to define a holomorphic function $T_{e,d}f$ in a sectorial region $G_d(\theta+\omega(\mathbb{M})\pi)$.

If $\omega(\mathbb{M})<2$ let $G=G_d(\theta)$ be a sectorial region with $\theta>\omega(\mathbb{M})\pi$, given $f\in\mathcal{O}(G,\mathbb{E})$ continuous at $0$, and $\tau\in\R$ with $|\tau-d|<(\theta-\omega(\mathbb{M})\pi)/2$, the operator $T^{-}_{e,\tau}$, known as the $e$-\emph{Borel transform} along direction $\tau$ is defined by
$$(T^{-}_{e,\tau}f)(u):=\frac{-1}{2\pi i}\int_{\delta_{\omega(\mathbb{M})}(\tau)}E(u/z)f(z)\frac{dz}{z},\quad u\in S_{\tau},$$
where $S_{\tau}$ is an infinite sector of bisecting direction $\tau$ and small enough opening, and $\delta_{\omega(\mathbb{M})}(\tau)$ is the Borel-like path consisting of the concatenation of a segment from the origin to a point $z_0$ with $\hbox{arg}(z_0)=\tau+\omega(\mathbb{M})(\pi+\epsilon)/2$, for some small enough $\epsilon\in(0,\pi)$, followed with the arc of circle centered at 0, joining $z_0$ and the point $z_1$, with $\hbox{arg}(z_1)=\tau-\omega(\mathbb{M})(\pi+\epsilon)/2$, clockwise, and concluding with the segment of endpoints  $z_1$ and the origin.

Let $G_{d}(\theta)$ and $f$ be as above. The family $\{T^{-}_{e,\tau}\}_{\tau}$, with $\tau$ varying among the real numbers with $|\tau-d|<(\theta-\omega(\mathbb{M})\pi)/2$ defines a holomorphic function denoted by $T^{-}_{e,d}f$ in the sector $S_{d}(\theta-\omega(\mathbb{M})\pi)$ and $T^{-}_{e,d}f\in\mathcal{O}^{\mathbb{M}}(S_{d}(\theta-\omega(\mathbb{M})\pi),\mathbb{E})$.
\end{prop}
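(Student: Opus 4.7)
The plan is to treat the two operators separately and, for each, establish (i) absolute convergence of the defining integral on a suitable parameter region, (ii) holomorphy in the parameter by standard parameter-integral arguments, and (iii) extension by deforming the integration direction $\tau$ via Cauchy's theorem as $\tau$ ranges over the permitted interval; for the Borel operator there is the additional (iv) verification of the $\mathcal{O}^{\mathbb{M}}$ estimate.

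For the $e$-Laplace part, I would fix $\tau\in\mathrm{arg}(S_d(\theta))$ and combine the kernel decay
$|e(u/z)|\le c\exp(-M(|u|/(k|z|)))$
(valid in any closed subsector of $S_0(\omega(\mathbb{M})\pi)$) with the growth estimate $\|f(u)\|_{\mathbb{E}}\le \tilde{c}\exp(M(|u|/\tilde{k}))$ on a subsector of $S_d(\theta)$. Convexity and monotonicity of $M$ show that, for $|z|$ sufficiently small and $z$ in a closed subsector of $|\mathrm{arg}\,z-\tau|<\omega(\mathbb{M})\pi/2$, the kernel's decay dominates the growth of $f$ uniformly in $u$ along the ray of argument $\tau$, so the integral converges absolutely and uniformly on compacta. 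Holomorphy in $z$ follows from Morera plus dominated convergence. To patch different choices of $\tau$, apply Cauchy's theorem between two rays of arguments $\tau,\tau'$ in $\mathrm{arg}(S_d(\theta))$, closing by an arc at infinity whose contribution vanishes thanks to the exponential decay of $e$. The resulting holomorphic function lives on the union of the parameter sectors obtained for each $\tau$; since $\tau$ sweeps an open interval of length $\theta$ and each $\tau$ admits an opening $\omega(\mathbb{M})\pi$, this union is a sectorial region $G_d(\theta+\omega(\mathbb{M})\pi)$.

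For the $e$-Borel part, the integrand is defined on the finite path $\delta_{\omega(\mathbb{M})}(\tau)$, which stays in a fixed small disc, so continuity of $f$ at the origin provides a uniform bound on $f$ along the path. The angular parameter $\epsilon$ in the definition of $\delta_{\omega(\mathbb{M})}(\tau)$ should be chosen so that on the two radial segments the quotient $u/z$ lies in the sector $S_\pi(\tilde\theta)$ of the polynomial estimate $|E(u/z)|\le \tilde{c}_2|z/u|^{\beta}$; this gives integrability of order $|z|^{\beta-1}$ at $0$. On the circular arc the modulus $|z|$ is bounded below, so the global estimate $|E(u/z)|\le\tilde{c}\exp(M(|u|/(|z|\tilde{k})))$ transfers directly into a bound $\tilde{c}'\exp(M(|u|/k'))$ in $u$ only. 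Adding these three contributions produces a bound of the desired $\mathcal{O}^{\mathbb{M}}$-form, while holomorphy in $u$ is again routine. Varying $\tau$ with $|\tau-d|<(\theta-\omega(\mathbb{M})\pi)/2$ and deforming $\delta_{\omega(\mathbb{M})}(\tau)$ by Cauchy's theorem yields a holomorphic function on $S_d(\theta-\omega(\mathbb{M})\pi)$, using the hypothesis $\theta>\omega(\mathbb{M})\pi$ so that the interval for $\tau$ is nonempty.

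The main obstacle will be the angular bookkeeping for the Borel part: the auxiliary $\epsilon$ must be fine-tuned so that on the radial segments of $\delta_{\omega(\mathbb{M})}(\tau)$ the quotient $u/z$ really falls into the domain $S_\pi(\tilde\theta)\setminus D(0,M_E)$ where the polynomial bound on $E$ is available, uniformly as $\tau$ and $u$ vary in their respective admissible ranges; once these geometric constraints are pinned down, the convergence, holomorphy and growth estimates are straightforward consequences of the kernel axioms listed in the definition.
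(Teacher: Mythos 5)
The paper does not prove this proposition: it is imported verbatim from the literature (Section~6 of the cited survey by Sanz), so there is no in-paper argument to compare against. Your outline reproduces the standard construction from that source and is essentially sound: absolute convergence and holomorphy of each parameter integral, gluing over $\tau$ by Cauchy's theorem, and the three-piece estimate of the Borel integral along $\delta_{\omega(\mathbb{M})}(\tau)$. Two small points are worth tightening if you were to write this out. First, for the convergence of the Laplace integral, ``convexity and monotonicity of $M$'' is not quite the right lever: what one actually uses is the consequence of the moderate growth condition $(mg)$ that there exist $\rho>1$ and $C>0$ with $2M(t)\le M(\rho t)+C$, so that choosing $|z|$ with $k|z|\le \tilde{k}/\rho$ makes $\exp\bigl(M(|u|/\tilde{k})-M(|u|/(k|z|))\bigr)\le e^{C}\exp\bigl(-M(|u|/\tilde{k})\bigr)$, which is integrable against $d|u|/|u|$. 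Second, on the radial segments of $\delta_{\omega(\mathbb{M})}(\tau)$ the polynomial bound on $E$ is only available where $|u/z|\ge M_E$, so for fixed $u$ one must split each segment at $|z|=|u|/M_E$ and use the global bound $|E(w)|\le\tilde{c}\exp(M(|w|/\tilde{k}))$, which is uniformly bounded for $|w|\le M_E$, on the outer portion; your sketch only invokes the split between segments and arc, not within the segments.
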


The following result describes a generalization of Theorem 30~\cite{balser} in the framework of strongly regular sequences, generalizing the result for Gevrey sequences. 

\begin{theo}\label{teo1}
Let $d\in\R$ and $\theta>0$. We also fix a strongly regular sequence $\mathbb{M}$ with positive $\omega(\mathbb{M})<2$ which admits a nonzero proximate order. Let $e$ and $E$ be a pair of kernel functions for $\mathbb{M}$-summability. Let $f\in\mathcal{O}^{\mathbb{M}}(S_d(\theta),\mathbb{E})$ and put $g(z)=(T_{e,d}f)(z)$, with $z\in G_d(\theta+\omega(\mathbb{M})\pi)$, a sectorial region with bisecting direction $d$ and opening $\theta+\omega(\mathbb{M})\pi$. Then, one has that $f\equiv T^{-}_{e,d}g$.
\end{theo}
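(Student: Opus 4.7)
The natural strategy is to substitute the definition of $g=T_{e,d}f$ into the formula for $T^{-}_{e,d}g$ and exchange the order of integration. For $w$ in a narrow sector of bisecting direction $d$, one starts from
\[
(T^{-}_{e,d}g)(w)=\frac{-1}{2\pi i}\int_{\delta_{\omega(\mathbb{M})}(d)}E(w/z)\,g(z)\,\frac{dz}{z}
\]
and replaces $g(z)$ by the Laplace integral $\int_{0}^{\infty(\tau(z))}e(u/z)f(u)\,\frac{du}{u}$, where the ray direction $\tau(z)$ is allowed to vary with $z$ so that $u/z$ stays inside $S_{0}(\omega(\mathbb{M})\pi)$, the domain of holomorphy of $e$. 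The first task is to justify Fubini's theorem for the resulting double integral; this relies on three estimates afforded by the axioms of the pair $(e,E)$ and the hypothesis $f\in\mathcal{O}^{\mathbb{M}}(S_d(\theta),\mathbb{E})$: the rapid decay of $e$ on proper subsectors of $S_{0}(\omega(\mathbb{M})\pi)$, the polynomial decay $|E(w/z)|\le\tilde{c}_{2}|z|^{\beta}/|w|^{\beta}$ on the portion of the Borel-type contour where $\arg(w/z)$ is close to $\pi$ and $|z|/|w|$ is small, and the $\mathbb{M}$-exponential bound on $|f|$ weighed against the decay of $e$.

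Once the interchange is legitimate, the proof reduces to showing that the inner kernel
\[
K(w,u):=\frac{-1}{2\pi i}\int_{\delta_{\omega(\mathbb{M})}(d)}E(w/z)\,e(u/z)\,\frac{dz}{z}
\]
satisfies $\int_{0}^{\infty(d)}K(w,u)f(u)\,\frac{du}{u}=f(w)$. The standard device is to expand $E(w/z)=\sum_{p\ge 0}(w/z)^{p}/m_{e}(p)$, integrate term by term, and change variable $t=u/z$ inside each summand. Each resulting integral is of Hankel type over $t^{p-1}e(t)$; by the holomorphy and decay of $e$ in $S_{0}(\omega(\mathbb{M})\pi)$ the Hankel contour can be collapsed onto the positive real ray, where it is evaluated via the defining identity $m_{e}(p)=\int_{0}^{\infty}t^{p-1}e(t)\,dt$. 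The cancellation between the factor $m_{e}(p)$ so produced and the $1/m_{e}(p)$ originating from the series for $E$ yields a geometric series in $w/u$ that sums to a Cauchy-type kernel and reproduces $f(w)$ upon integration along the ray, via a final deformation of contour inside the domain of holomorphy of $f$.

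The main obstacle is the delicate bookkeeping in this second step: justifying term-by-term integration of the series for $E$ against $\delta_{\omega(\mathbb{M})}(d)$, rotating $\tau(z)$ consistently so that every inner Laplace integral is well-defined, and performing all contour deformations so as to respect the domain of holomorphy of $e$. These operations are the $\mathbb{M}$-analogue of the Hankel formula $1/\Gamma(p+1)=\tfrac{1}{2\pi i}\int_{H}e^{-t}t^{-p-1}\,dt$ that underlies Balser's proof of the Gevrey version (Theorem 30 of~\cite{balser}), with the role of $\Gamma$ now played by the moment function $m_{e}$; the assumption that $\mathbb{M}$ admits a nonzero proximate order provides just enough regularity on the pair $(e,E)$ for the relevant identities to go through.
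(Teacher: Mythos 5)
The paper does not actually prove Theorem~\ref{teo1}: it is imported from the literature as the strongly regular analogue of Theorem 30 of~\cite{balser}, with the proof residing in the cited references on general summability kernels (Section 6 of~\cite{sanzproceedings}). Your global strategy --- insert the Laplace integral into the Borel integral, justify an interchange of integration, and show that the inner kernel $K(w,u)=\frac{-1}{2\pi i}\int_{\delta_{\omega(\mathbb{M})}(d)}E(w/z)e(u/z)\frac{dz}{z}$ acts as a reproducing kernel --- is indeed the classical line of argument, so at that level there is nothing to object to.

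The gap is in your evaluation of $K(w,u)$ by termwise integration of the Taylor series of $E$. First, once you replace $E(w/z)$ by a single monomial $(w/z)^p/m_e(p)$, the resulting integral $\int_{\delta}z^{-p}e(u/z)\frac{dz}{z}$ need not converge: on the two radial pieces of $\delta_{\omega(\mathbb{M})}(d)$ one has $\arg(u/z)=\mp\omega(\mathbb{M})(\pi+\epsilon)/2$, which lies \emph{outside} $S_0(\omega(\mathbb{M})\pi)$, where $e$ has no decay (for the Gevrey kernel $e(t)=kt^{k}e^{-t^{k}}$ it grows as $z\to0$ along those rays). The convergence of the original double integral near $z=0$ is carried by the polynomial decay $|E(w/z)|\lesssim|z/w|^{\beta}$, and this is exactly what is destroyed by expanding $E$ term by term. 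Second, even formally, the image of $\delta$ under $t=u/z$ is an open contour running from infinity to infinity while avoiding the origin; if $e$ did decay up to the boundary rays, collapsing that contour onto the positive real axis (traversed once in each direction) would give $0$ by Cauchy's theorem, not $m_e(p)=\int_0^{\infty}t^{p-1}e(t)\,dt$ --- so the cancellation you invoke cannot arise this way. Third, the putative geometric series in $w/u$ diverges on the portion of the Laplace ray where $|u|<|w|$, which is always nonempty, so it cannot be summed to a Cauchy kernel under the integral sign. The actual proofs (Balser in the Gevrey case, and its generalization for kernels coming from a proximate order) replace the Taylor expansion by a resolvent-type integral representation of $E$ in terms of $e$ --- the Mittag--Leffler representation in the classical case --- which exhibits $K(w,u)$ directly as a Cauchy kernel concentrated at $u=w$; that representation is the missing ingredient. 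Finally, since the Laplace direction $\tau(z)$ must rotate with $z$ along $\delta$, the double integral is not over a product domain, so before Fubini one must normalize the directions by a Cauchy deformation; your plan notes the rotation but not that it invalidates the naive Fubini statement.
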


Finally, we recall the definition of moment differentiation.

\begin{defin}
Given a sequence of moments $(m_e(p))_{p\ge0}$, the $m_e$-moment differentiation $\partial_{m_e,z}$ is defined on $\mathbb{E}[[z]]$ by
$$\partial_{m_e,z}\left(\sum_{p\ge0}\frac{a_p}{m_e(p)}z^p\right):=\sum_{p\ge0}\frac{a_{p+1}}{m_e(p)}z^p.$$
This definition can be naturally extended to holomorphic functions defined in a neighborhood of the origin, and also to the $\mathbb{M}$-sum of a formal power series (see~\cite{LMS2}, Definition 10).

The linear operator $\partial^{-1}_{m_e,z}$, the inverse of the moment derivative, is defined by 
$$\partial^{-1}_{m_e,z}(z^p)=\frac{m_e(p)}{m_e(p+1)}z^{p+1}$$
for every $p\ge0$, and formally extended to $\mathbb{E}[[z]]$.
\end{defin}

\subsection{Banach spaces of holomorphic functions}

In this subsection we set out to define and state related properties concerning the Banach spaces of holomorphic functions involved in the proof of the main result, Theorem~\ref{teopral}. By $\left(\EE,\|\cdot\|_{\EE}\right)$ we will henceforth denote a complex Banach space, and we fix a positive real number $r$. 

\begin{defin}\label{defi:norm}
Let $0<\tilde{r}< r$. We consider the space $\mathcal{O}(\overline{D}(0,r),\mathbb{E})$ of holomorphic functions in $\overline{D}(0,r)$, with values in $\mathbb{E}$, endowed with the norm given by
\[
\left\|\sum_{n=0}^{\infty}f_{n}z^{n}\right\|_{\tilde{r}}=\sum_{n=0}^{\infty}\left\|f_{n}\right\|_{\mathbb{E}}|z|^{n},\quad z\in\overline{D}(0,r),\,|z|=\tilde{r},
\]
for every $f(z)=\sum_{n=0}^{\infty}f_{n}z^{n}\in\mathcal{O}(\overline{D}(0,r),\mathbb{E})$.
\end{defin}

\begin{lemma}\label{rem:norm_product}
Let us take $f,g\in\Oo(\overline{D}(0,r),\mathbb{E})$ with $f(z)=\sum_{n=0}^{\infty} f_n z^n$ and $g(z)=\sum_{n=0}^{\infty} g_nz^n$. Then,
$$
\|f(z)g(z)\|_{\tilde{r}}\le \|f(z)\|_{\tilde{r}}\,\|g(z)\|_{\tilde{r}},\quad z\in\overline{D}(0,r),\,|z|=\tilde{r}< r.
$$
 
\end{lemma}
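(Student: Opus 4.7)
The plan is to expand both functions as power series, form their Cauchy product, and then estimate coefficient by coefficient via the triangle inequality in $\mathbb{E}$ together with submultiplicativity of $\|\cdot\|_{\mathbb{E}}$ (implicit in the product $f_k g_{n-k}$ being meaningful in $\mathbb{E}$, i.e.\ in the Banach-algebra setting under which this norm is being used).

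First I would write the Cauchy product
$$f(z)g(z)=\sum_{n=0}^{\infty}\Bigl(\sum_{k=0}^{n}f_k g_{n-k}\Bigr)z^n,$$
which is legitimate on $\overline{D}(0,r)$ thanks to absolute convergence of both series for $|z|=\tilde r<r$. Next, I would invoke Definition~\ref{defi:norm} applied to $fg$, combined with the triangle inequality and submultiplicativity of the norm in $\mathbb{E}$, to get
$$\|f(z)g(z)\|_{\tilde r}=\sum_{n=0}^{\infty}\Bigl\|\sum_{k=0}^{n}f_k g_{n-k}\Bigr\|_{\mathbb{E}}\tilde r^{\,n}\le \sum_{n=0}^{\infty}\sum_{k=0}^{n}\|f_k\|_{\mathbb{E}}\|g_{n-k}\|_{\mathbb{E}}\,\tilde r^{\,k}\tilde r^{\,n-k}.$$

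Finally, since every summand is non-negative and each of the two series $\sum_{k\ge 0}\|f_k\|_{\mathbb{E}}\tilde r^{\,k}$, $\sum_{j\ge 0}\|g_j\|_{\mathbb{E}}\tilde r^{\,j}$ converges (again because $\tilde r<r$), Fubini/Tonelli allows the double sum to be rewritten as a product of one-dimensional sums, yielding
$$\sum_{n=0}^{\infty}\sum_{k=0}^{n}\|f_k\|_{\mathbb{E}}\|g_{n-k}\|_{\mathbb{E}}\tilde r^{\,k}\tilde r^{\,n-k}=\Bigl(\sum_{k=0}^{\infty}\|f_k\|_{\mathbb{E}}\tilde r^{\,k}\Bigr)\Bigl(\sum_{j=0}^{\infty}\|g_j\|_{\mathbb{E}}\tilde r^{\,j}\Bigr)=\|f(z)\|_{\tilde r}\,\|g(z)\|_{\tilde r},$$
which is the desired inequality. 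There is no substantial obstacle; the only point worth a sentence of care is the rearrangement of the non-negative double series, which is justified by absolute convergence, and the implicit submultiplicativity used at the coefficient level.
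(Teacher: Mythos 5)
Your proof is correct and follows essentially the same route as the paper's: expand the Cauchy product, apply the triangle inequality and submultiplicativity coefficientwise, and recombine the non-negative double sum into the product of the two norms. The only difference is that you spell out the Fubini/Tonelli rearrangement and the implicit Banach-algebra structure on $\mathbb{E}$, both of which the paper leaves tacit.
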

\begin{proof}
Let $0<\tilde{r}< r$. It is enough to note that
\begin{equation*}
 \|f(z)g(z)\|_{\tilde{r}}=\sum_{n=0}^{\infty}\left\|\sum_{k=0}^n f_k g_{n-k}\right\|_{\mathbb{E}}\tilde{r}^n \le \sum_{n=0}^{\infty}\sum_{k=0}^n \left\|f_k\right\|_{\mathbb{E}}\,\left\|g_{n-k}\right\|_{\mathbb{E}}\tilde{r}^{n}=\|f(z)\|_{\tilde{r}}\,\|g(z)\|_{\tilde{r}}.
\end{equation*}

\end{proof}

\begin{lemma}[Lemma 5,~\cite{LMS2}]\label{lem:integral}
Let $m(n)$ be a moment sequence and let $f\in\mathcal{O}\left(\overline{D}(0,r)\right)$.
If there exists $C<\infty$ and $n\in\NN_{0}$ such that 
\[
\|f(z)\|_{\tilde{r}}\leq C\frac{|z|^{n}}{m(n)}\textrm{ for every }z\in\overline{D}(0,r),\ |z|=\tilde{r}< r,
\]
then 
\[
\|\partial_{m,z}^{-k}f(z)\|_{\tilde{r}}\leq C\frac{|z|^{n+k}}{m(n+k)}\textrm{ for every }k\in\NN_{0}\textrm{ and }z\in\overline{D}(0,r),\ |z|=\tilde{r}.
\]
\end{lemma}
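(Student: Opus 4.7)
The plan is to expand $f$ as a power series, to observe that the hypothesis forces the low-order Taylor coefficients to vanish, and then to apply the monotonicity of the quotient $m(p)/m(p+k)$ established earlier in Section~\ref{secsrs} in order to bound $\|\partial_{m,z}^{-k}f\|_{\tilde r}$ term-by-term.

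First I write $f(z)=\sum_{j\ge 0}f_jz^j$, so that by Definition~\ref{defi:norm} the hypothesis reads $\sum_{j\ge 0}|f_j|\tilde r^{j}\le C\tilde r^{n}/m(n)$ for every $\tilde r\in(0,r)$. Dividing by $\tilde r^{n}$ and letting $\tilde r\to 0^{+}$ forces $f_j=0$ whenever $j<n$; otherwise the smallest nonzero coefficient would make the left-hand side blow up relative to $\tilde r^{n}$. Iterating the defining formula $\partial_{m,z}^{-1}(z^p)=\frac{m(p)}{m(p+1)}z^{p+1}$ then yields
\[
\partial_{m,z}^{-k}f(z)=\sum_{j\ge n}f_j\,\frac{m(j)}{m(j+k)}\,z^{j+k},
\]
whose $\|\cdot\|_{\tilde r}$-norm is the series $\sum_{j\ge n}|f_j|\frac{m(j)}{m(j+k)}\tilde r^{j+k}$.

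The key step is then to extract the factor $m(n)/m(n+k)$ uniformly in $j$. For this I will invoke the monotonicity lemma for $M_p/M_{p+q}$ stated just before Lemma~\ref{lem:floor_quotient}, applied to the strongly regular moment sequence $m$: for every $j\ge n$ one has $\frac{m(j)}{m(j+k)}\le\frac{m(n)}{m(n+k)}$. Pulling this out of the sum and using the hypothesis once more will give $\|\partial_{m,z}^{-k}f(z)\|_{\tilde r}\le\frac{m(n)}{m(n+k)}\,\tilde r^{k}\,\|f(z)\|_{\tilde r}\le C\tilde r^{n+k}/m(n+k)$, which is exactly the claim (the case $k=0$ being trivial).

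The main subtlety I anticipate is that a naive bound of $m(j)/m(j+k)$ uniformly in $j\ge 0$ goes the wrong way: by $(lc)$ the ratio $m(p+1)/m(p)$ is non-decreasing, so $\frac{m(0)}{m(k)}\ge\frac{m(n)}{m(n+k)}$, which would spoil the $1/m(n+k)$ we need on the right-hand side. It is precisely the forced vanishing of $f_j$ for $j<n$ that restricts the sum to the regime $j\ge n$ where the sharp monotonicity inequality is available.
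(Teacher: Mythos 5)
The paper gives no proof of this lemma---it is imported verbatim as Lemma 5 of \cite{LMS2}---so there is nothing in the text to compare against; judged on its own, your argument is correct and is the standard one. Both steps are sound: the hypothesis does force $f_j=0$ for $j<n$ (otherwise dividing by $\tilde r^{\,n}$ and letting $\tilde r\to 0^+$ gives a contradiction), and the termwise bound $\frac{m(j)}{m(j+k)}\le\frac{m(n)}{m(n+k)}$ for $j\ge n$ is exactly the monotonicity lemma preceding Lemma~\ref{lem:floor_quotient}, legitimately applied to $m$ since the paper asserts that moment sequences are strongly regular, hence $(lc)$. Your closing observation---that without the forced vanishing of the low-order coefficients the ratio estimate would go the wrong way---correctly identifies the one genuinely non-routine point of the proof.
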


A generalization of the previous lemma is the following:

\begin{lemma}\label{lem:integral2} Suppose that $f(z)=f_{1}(z)+\dots+f_{p}(z)$
with $f_{j}\in\mathcal{O}(\overline{D}(0,r))$ for $j=1,\dots,p$ and 
\[
\|f_{j}(z)\|_{\tilde{r}}\leq C_{j}\frac{|z|^{n_{j}}}{m(n_{j})}\textrm{ for every }z\in\overline{D}(0,r),\ |z|=\tilde{r}< r,\ j=1,\dots,p.
\]
Then for any $k\in\NN_{0}$ we have 
\[
\|\partial_{m,z}^{-k}f(z)\|_{\tilde{r}}\leq\sum_{j=1}^{p}C_{j}\frac{|z|^{n_{j}+k}}{m(n_{j}+k)}\textrm{ for every }z\in\overline{D}(0,r),\ |z|=\tilde{r}.
\]
\end{lemma}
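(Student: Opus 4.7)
The plan is to reduce Lemma~\ref{lem:integral2} to the previous Lemma~\ref{lem:integral} by exploiting linearity of the inverse moment operator together with the triangle inequality for the norm $\|\cdot\|_{\tilde r}$. There is essentially no new analytic content to produce; the lemma is a bookkeeping statement.

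First I would observe that $\partial_{m,z}^{-1}$ is defined on monomials by $\partial_{m,z}^{-1}(z^p)=\frac{m(p)}{m(p+1)}z^{p+1}$ and extended by linearity to $\mathbb{E}[[z]]$, so $\partial_{m,z}^{-k}$ is linear for every $k\in\mathbb{N}_0$. Hence
\[
\partial_{m,z}^{-k}f(z)=\sum_{j=1}^{p}\partial_{m,z}^{-k}f_{j}(z).
\]
Next I would note that the norm from Definition~\ref{defi:norm} satisfies the triangle inequality: if $h=\sum_{n\ge 0}h_n z^n$ is any convergent sum $h=\sum_j h^{(j)}$ in $\mathcal{O}(\overline{D}(0,r),\mathbb{E})$, then coefficient-wise $h_n=\sum_j h^{(j)}_n$, and applying $\|\cdot\|_{\mathbb{E}}$ to each coefficient and summing in $n$ with weights $\tilde r^n$ yields $\|h\|_{\tilde r}\le\sum_j\|h^{(j)}\|_{\tilde r}$.

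Applying this to the finite sum above gives
\[
\bigl\|\partial_{m,z}^{-k}f(z)\bigr\|_{\tilde r}\le\sum_{j=1}^{p}\bigl\|\partial_{m,z}^{-k}f_{j}(z)\bigr\|_{\tilde r}.
\]
Finally, for each $j$ the hypothesis $\|f_{j}(z)\|_{\tilde r}\le C_{j}|z|^{n_{j}}/m(n_{j})$ is exactly the assumption of Lemma~\ref{lem:integral} with the constant $C_j$ and the index $n_j$, so that lemma produces
\[
\bigl\|\partial_{m,z}^{-k}f_{j}(z)\bigr\|_{\tilde r}\le C_{j}\frac{|z|^{n_{j}+k}}{m(n_{j}+k)}.
\]
Summing over $j=1,\dots,p$ yields the claimed estimate. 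There is no genuine obstacle here; the only point requiring a brief justification is the triangle inequality for $\|\cdot\|_{\tilde r}$, which is immediate from the definition, and the linearity of $\partial_{m,z}^{-k}$.
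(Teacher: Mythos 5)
Your proposal is correct and follows exactly the paper's own argument: apply Lemma~\ref{lem:integral} to each $f_j$, then combine via linearity of $\partial_{m,z}^{-k}$ and the triangle inequality for $\|\cdot\|_{\tilde r}$. The extra justifications you supply (linearity, coefficient-wise triangle inequality) are correct but left implicit in the paper.
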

\begin{proof}
We use the previous lemma to all $f_{j}(z)$ to receive the following
\[
\|\partial_{m,z}^{-k}f(z)\|_{\tilde{r}}\le\sum_{j=1}^{p}\|\partial_{m,z}^{-k}f_{j}(z)\|_{\tilde{r}}\le\sum_{j=1}^{p}C_{j}\frac{|z|^{n_{j}+k}}{m(n_{j}+k)}.
\]
\end{proof}

\section{Summability of formal solutions of moment integro-differential equations with time variable coefficients}\label{secsum}

In this central section we achieve summability results of the formal solution of certain family of integro-differential equations in the complex domain. First, we state some preliminary geometric constructions regarding the Newton polygon associated with the integro-differential equations under study.

\subsection{The Newton polygon} The Newton polygon is a classical tool representing partial differential equations in a geometric fashion, introduced in~\cite{yonemura}. In the framework of linear moment partial differential equations this concept was put forward in~\cite{michalik17fe} (resp.~\cite{michaliksuwinska}), when dealing with PDEs with constant (resp. time-dependent) coefficients, also considered in~\cite{suwinska}. The definition provided here is a slightly modified version adapted to the framework of the family of linear integro-differential equations under consideration.

Let $\Kk\subset\left\{ 1,\dots,\kappa\right\} \subset\NN$ for certain
integer $\kappa\geq1$ and all $p_{i}\geq0$ for $i\in\Kk$. For all $i\in\Kk$ and $0\le q\le p_i$ we fix $a_{iq}(z)\in\mathcal{O}(D(0,r))$, for some positive $r$. 

Let $\mathbb{M}=(M_p)_{p\ge0}$ be a sequence of positive real numbers, and $m_1,m_2$ be regular $M_p$-sequences of positive orders $s_1,s_2$, respectively. 

We define 
\begin{equation}\label{e249}P(z,\partial_{m_1,t},\partial_{m_2,z})=1-\sum_{i\in\Kk}\sum_{q=0}^{p_i}a_{iq}(z)\partial_{m_1,t}^{-i}\partial_{m_2,z}^{q}.
\end{equation}

\begin{defin}
The Newton polygon associated with $P$ in (\ref{e249}) is given by
$$N(P,s_1,s_2)=\hbox{conv}\left\{\Delta(\kappa s_1,-\kappa)\cup\left(\bigcup_{i\in\Kk,q=0,\ldots,p_i}\Delta((\kappa-i)s_1+p_is_2,i-\kappa)\right)\right\},$$
where $\hbox{conv}\{\}$ stands for the convex hull and 
$$\Delta(a,b)=\{(x,y)\in\RR^2: x\le a, y\ge b\},\qquad (a,b)\in\RR^2.$$
\end{defin}

\subsection{Statement of the main problem}

 Let us consider a linear integro-differential equation of the form 
\begin{equation}
\left(1-\sum_{i\in\Kk}\sum_{q=0}^{p_{i}}a_{iq}(z)\partial_{m_{1},t}^{-i}\partial_{m_{2},z}^{q}\right)u(t,z)=\hat{f}(t,z)\label{eq:main}
\end{equation}
with $\Kk\subset\left\{ 1,\dots,\kappa\right\} \subset\NN$ for certain
integer $\kappa\geq1$ and all $p_{i}\geq0$ for $i\in\Kk$. Let $\mathbb{M}=(M_p)_{p\ge0}$ be a strongly regular sequence which satisfies (\ref{eq:seq_property}). Sequences $m_{1}=(m_1(p))_{p\ge0}$ and $m_{2}=(m_2(p))_{p\ge0}$ are regular $M_p$-sequences of positive orders $s_{1},s_{2}$,
respectively, i.e., there exist constants $0<a_{1},a_{2},b_{1},b_{2}<\infty$
such that for $j=1,2$ we have 
\begin{equation}
a_{j}\left(\frac{M_{n}}{M_{n-1}}\right)^{s_{j}}\leq\frac{m_{j}(n)}{m_{j}(n-1)}\leq b_{j}\left(\frac{M_{n}}{M_{n-1}}\right)^{s_{j}}\textrm{ for }n\in\NN.\label{eq:regular}
\end{equation}

All $a_{iq}(z)$ and moreover $\frac{1}{a_{\kappa p_{\kappa}}(z)}$ are assumed to be holomorphic functions in $z\in D(0,r)$, for some $0<r<1$, and $\hat{f}(t,z)\in\C[[t,z]]$. 

We also assume that the Newton polygon of the operator 
$$P(z,\partial_{m_1,t}^{-1},\partial_{m_2,z})=1-\sum_{i\in\Kk}\sum_{q=0}^{p_{i}}a_{iq}(z)\partial_{m_{1},t}^{-i}\partial_{m_{2},z}^{q}$$
shows one non-horizontal segment with positive slope $k$ given by the formula 
\begin{equation}
\frac{1}{k}=\frac{s_{2}p_{\kappa}-s_{1}\kappa}{\kappa}=\frac{s_{2}p_{\kappa}}{\kappa}-s_{1}.\label{eq:k-defi}
\end{equation}

Observe that (\ref{eq:k-defi}) entails 
$$
s_2p_{\kappa}>s_1\kappa.
$$
The Newton polygon associated with the operator $P$ is displayed in Figure~\ref{fig1}. Observe that the existence of a positive slope defining the Newton polygon can be read in terms of the elements involved in the definition of $P$ as follows.

\begin{remark}
There exists at least one $i\in\Kk$ such that we have $\frac{p_{i}}{i}>\frac{s_{1}}{s_{2}}$. Otherwise, one would have $(\kappa-i)s_1+p_is_2\le \kappa s_1$ for all $i\in\Kk$, which entails the absence of a positive slope within the Newton polygon.
\end{remark}

Moreover, from (\ref{eq:k-defi}) one also arrives at the following algebraic constraint. 

\begin{remark}\label{rem-p_k-p_i}

For all $i\in\Kk$ we have 
\begin{equation}
\frac{p_{i}}{i}\leq\frac{p_{\kappa}}{\kappa}.\label{eq:p_i-and-p_k}
\end{equation}
\end{remark}
\begin{proof}
Fix $i\in\Kk.$ Then 
\[
\frac{s_{2}p_{i}-s_{1}i}{i}\leq\frac{s_{2}p_{\kappa}-s_{1}\kappa}{\kappa},
\]
see Figure~\ref{fig1}. Hence, one gets that $s_{2}\kappa p_{i}-s_{1}\kappa i \leq s_{2}ip_{\kappa}-s_{1}\kappa i$, and consequently (\ref{eq:p_i-and-p_k}) holds.
\end{proof}

\begin{figure}
	\centering
		\includegraphics[width=0.45\textwidth]{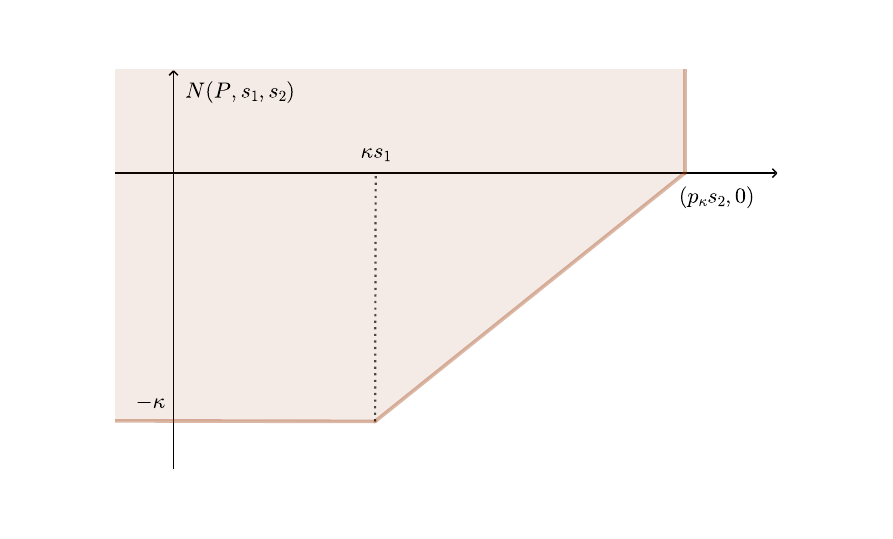}
		\includegraphics[width=0.45\textwidth]{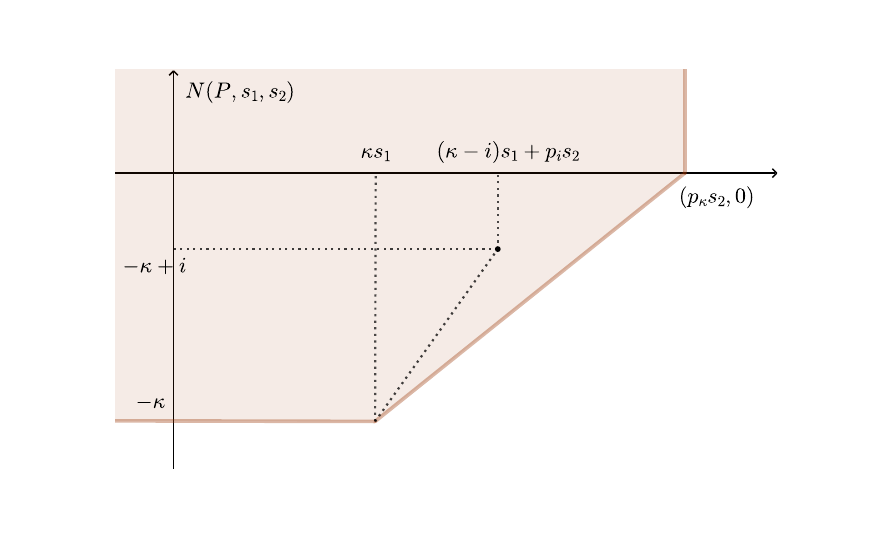}
		\caption{Newton polygon associated with $P$ (left) and geometric interpretation (right)}\label{fig1}
\end{figure}

\begin{prop}
The problem (\ref{eq:main}) admits a unique formal solution $\hat{u}(t,z)\in\mathbb{C}[[t,x]]$.
\end{prop}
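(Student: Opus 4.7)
The plan is to reduce the existence and uniqueness of the formal solution to a triangular recurrence on the coefficients of $t$, solved in $\mathbb{C}[[z]]$. Writing
\[
\hat{u}(t,z)=\sum_{n\ge 0}u_n(z)\frac{t^n}{m_1(n)},\qquad \hat{f}(t,z)=\sum_{n\ge 0}f_n(z)\frac{t^n}{m_1(n)},
\]
with $u_n,f_n\in\mathbb{C}[[z]]$, the definition of $\partial_{m_1,t}^{-i}$ gives the clean identity
\[
\partial_{m_1,t}^{-i}\partial_{m_2,z}^{q}\hat{u}(t,z)=\sum_{n\ge i}\bigl(\partial_{m_2,z}^{q}u_{n-i}(z)\bigr)\frac{t^n}{m_1(n)}.
\]

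Substituting into (\ref{eq:main}) and identifying coefficients of $t^n/m_1(n)$ yields, for $n=0$, the equality $u_0(z)=f_0(z)$, and for $n\ge 1$ the recursion
\[
u_n(z)=f_n(z)+\sum_{\substack{i\in\Kk\\ i\le n}}\sum_{q=0}^{p_i}a_{iq}(z)\,\partial_{m_2,z}^{q}u_{n-i}(z).
\]
Since the right-hand side only involves $u_0,\ldots,u_{n-1}$, the sequence $(u_n)_{n\ge 0}\subset\mathbb{C}[[z]]$ is determined uniquely and unambiguously by the data $(f_n)_{n\ge 0}$ and the coefficients $a_{iq}(z)\in\mathcal{O}(D(0,r))\subset\mathbb{C}[[z]]$, because the moment derivation $\partial_{m_2,z}$ and the product with a formal series are well-defined operations on $\mathbb{C}[[z]]$.

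Consequently, $\hat{u}(t,z):=\sum_{n\ge 0}u_n(z)t^n/m_1(n)$ is a well-defined element of $\mathbb{C}[[z]][[t]]=\mathbb{C}[[t,z]]$ and, by construction, solves (\ref{eq:main}); conversely any formal solution must have coefficients satisfying the same recurrence, so it coincides with $\hat{u}$. No analytic estimates are required at this stage, which is why the statement is essentially a bookkeeping result; the only point worth underlining is that the triangular structure in $n$ is an immediate consequence of the fact that every operator appearing in (\ref{eq:main}) involves $\partial_{m_1,t}^{-i}$ with $i\ge 1$, so no identification of coefficients is ever circular. The real difficulty of the paper—namely, showing that $\hat{u}$ inherits $\mathbb{M}$-summability in $t$ from $\hat{f}$—lies in controlling the growth of the $u_n$ through the recurrence, and will be addressed in the subsequent sections.
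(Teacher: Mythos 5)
Your argument is correct, and it takes a genuinely different route from the paper's. You expand $\hat u$ and $\hat f$ in powers of $t$, so that each operator $\partial_{m_1,t}^{-i}\partial_{m_2,z}^{q}$ with $i\in\Kk$ strictly raises the $t$-degree (since $i\ge 1$), and identifying the coefficient of $t^n/m_1(n)$ yields the triangular recursion $u_n=f_n+\sum_{i\in\Kk,\,i\le n}\sum_{q=0}^{p_i}a_{iq}\,\partial_{m_2,z}^{q}u_{n-i}$ in $\mathbb{C}[[z]]$, which determines every $u_n$ with no division and no operator inversion. The paper instead expands in powers of $z$, writing $\hat u(t,z)=\sum_{p\ge0}\tilde u_p(t)z^p/m_2(p)$, and isolates the top term $a_{\kappa p_\kappa,0}\,\partial_{m_1,t}^{-\kappa}\tilde u_{p+p_\kappa}(t)$ from the coefficient of $z^p$; that recursion runs in the $z$-direction, needs $a_{\kappa p_\kappa}(0)\neq 0$, requires inverting the (injective but not surjective) operator $\partial_{m_1,t}^{-\kappa}$, and, as written, determines the coefficients only once the first few $\tilde u_p$ are prescribed. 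Your $t$-expansion therefore buys something real: it uses neither the invertibility of $a_{\kappa p_\kappa}$ near the origin nor the Newton-polygon hypotheses, and it delivers existence together with unconditional uniqueness in one stroke, which is exactly what the proposition asserts. What it does not provide --- and what the paper's $z$-wise computation is really preparing --- is the splitting $\hat u=\sum_{n=0}^{p_\kappa-1}u_n(t)z^n+\partial_{m_2,z}^{-p_\kappa}\hat v$ around which the summability argument of Theorem~\ref{teopral} is organized; the two decompositions are complementary rather than interchangeable.
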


\begin{proof}
Let us write $a_{iq}(z)=\sum_{p\ge0}a_{iq,p}z^p$, and $\hat{f}(t,z)=\sum_{p\ge0}\tilde{f}_p(t)z^p$. We put $\hat{u}(t,z)=\sum_{p\ge0}\tilde{u}_p(t)\frac{z^p}{m_2(p)}$ with $\tilde{f}_p,\tilde{u}_p\in\C[[t]]$. We plug the formal power series into the main equation to obtain
$$ \tilde{u}_{p}(t)-\sum_{i\in\Kk}\sum_{q=0}^{p_i}\sum_{l_1+l_2=p}a_{iq,l_1}\partial_{m_1,t}^{-i}\tilde{u}_{l_2+q}(t)m_2(p)/m_2(l_2)=\tilde{f}_p(t)m_2(p).$$
This entails that 
\begin{multline*}
a_{\kappa p_{\kappa},0}\partial_{m_1,t}^{-\kappa}\tilde{u}_{p+p_{\kappa}}(t)=\tilde{u}_p(t)-\tilde{f}_p(t)m_2(p)-\sum_{l_1+l_2=p,\,l_1>0}a_{\kappa p_{\kappa},l_1}\partial_{m_1,t}^{-\kappa}\tilde{u}_{l_2+p_{\kappa}}(t)m_2(p)/m_2(l_2)\\
-\sum_{i\in\Kk}\sum_{q\in Q_i}\sum_{l_1+l_2=p}a_{iq,l_1}\partial^{-i}_{m_1,t}\tilde{u}_{l_2+q}(t)m_2(p)/m_2(l_2),
\end{multline*}
with $Q_i=\{0,\ldots,p_i\}$ if $i<\kappa$ and $Q_{\kappa}=\{0,\ldots,p_{\kappa}-1\}$. We observe from the hypotheses made on the problem that $a_{\kappa p_{\kappa},0}\neq 0$, obtaining a unique formal power series, provided $\tilde{u}_{p}(t)$ are given for $p=0,\ldots,\kappa-1$.

\end{proof}

Hereinafter, $\mathbb{E}$ stands for the Banach space of holomorphic functions in $D(0,r)$, continuous up to $\overline{D}(0,r)$.
We are in conditions to state the main result. 

\begin{theo}\label{teopral}
Let $\overline{\mathbb{M}}=(M_p^{1/k})_{p\ge0}$, with $k$ given by (\ref{eq:k-defi}). Assume that $\hat{f}(t,z)\in\mathbb{E}[[t]]$ is $\overline{\mathbb{M}}$-summable along direction $d\in\R$. Moreover, assume that 
\begin{equation}\label{e498}
\partial_{m_1,t}^{-i}u_n(t), \qquad n\in\{0,\ldots,p_{\kappa}-1\},\quad i\in\Kk
\end{equation}
are $\overline{\mathbb{M}}$-summable along direction $d$. Then, it holds that the formal solution $\hat{u}(t,z)\in\mathbb{E}[[t]]$ of (\ref{eq:main}) is $\overline{\mathbb{M}}$-summable along direction $d$. 
\end{theo}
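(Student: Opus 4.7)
The plan is to show $\overline{\mathbb{M}}$-summability of $\hat u$ by combining two ingredients: an induction that propagates summability through the recursion derived from (\ref{eq:main}), and Banach-space estimates on the $m_e$-Borel transforms in $t$ that ensure uniform control over all $z$-expansion coefficients. Here $m_e$ is a moment sequence associated with $\overline{\mathbb{M}}=(M_p^{1/k})$ and a fixed pair of kernel functions (which exists because $\overline{\mathbb{M}}$ is strongly regular and admits a nonzero proximate order).

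\textbf{Step 1 (recursion in the $z$-expansion).} Expanding $\hat u(t,z)=\sum_{p\ge 0}u_p(t)\,z^p/m_2(p)$ and substituting into (\ref{eq:main}), then isolating the contribution of $(i,q,l_1,l_2)=(\kappa,p_\kappa,0,p)$ exactly as in the uniqueness proof, I obtain
\begin{equation*}
a_{\kappa p_\kappa,0}\,\partial_{m_1,t}^{-\kappa}u_{p+p_\kappa}(t)\;=\;u_p(t)-m_2(p)\tilde f_p(t)-R_p(t),
\end{equation*}
where $R_p(t)$ is a finite sum of terms $a_{iq,l_1}\partial_{m_1,t}^{-i}u_{l_2+q}\,m_2(p)/m_2(l_2)$ with $l_2+q<p+p_\kappa$ and $i\in\Kk$. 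The hypothesis that $1/a_{\kappa p_\kappa}$ is holomorphic at $0$ lets me divide through, expressing $\partial_{m_1,t}^{-\kappa}u_{p+p_\kappa}$ in terms of strictly earlier quantities together with $u_p$ and $\tilde f_p$.

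\textbf{Step 2 (induction on summability).} I would prove by strong induction on $p$ that, for each $i\in\Kk$, both $u_p(t)$ and $\partial_{m_1,t}^{-i}u_p(t)$ are $\overline{\mathbb{M}}$-summable along $d$. The base case $p<p_\kappa$ is the content of (\ref{e498}) for the integrated versions, while summability of $u_p$ itself for $p<p_\kappa$ is read off the original equation (the $z^p$-coefficient) combined with summability of $\hat f$ and of the integrated versions at lower indices; the apparent circularity with $a_{\kappa p_\kappa,0}\partial_{m_1,t}^{-\kappa}u_{p+p_\kappa}$ is handled by treating $u_p$ and $\partial_{m_1,t}^{-\kappa}u_{p+p_\kappa}$ as a coupled pair determined simultaneously from the summable data. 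For the inductive step at index $p\ge p_\kappa$, the recursion of Step~1 propagates summability, using that the class of $\overline{\mathbb{M}}$-summable series is a differential algebra closed under $m_e$-differentiation (Proposition~\ref{prop1}) and that multiplication by the analytic germs $a_{iq}$ preserves summability.

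\textbf{Step 3 (uniform bounds in $\mathbb E$).} Coefficient-wise summability must be quantified to yield $\overline{\mathbb M}$-summability of $\hat u$ as an element of $\mathbb E[[t]]$. On $\mathbb E=\mathcal O(\overline D(0,r))$ with the norms of Definition~\ref{defi:norm}, Lemmas~\ref{rem:norm_product} and~\ref{lem:integral2} together with the regularity of $m_1,m_2$ in (\ref{eq:regular}) allow one to propagate bounds on the Borel transforms $V_p(\tau):=\hat{\mathcal B}_{m_e,t}u_p(\tau)$ of the form
\begin{equation*}
\|V_p(\tau)\|_{\tilde r}\;\le\;C A^p\,m_2(p)\,\exp\!\bigl(\overline M(|\tau|/\tilde k)\bigr),\qquad \tau\in\hat S_d,
\end{equation*}
with constants $C,A,\tilde k$ independent of $p$. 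Summing in $p$ produces a function $v(\tau,z)\in\mathcal O^{\overline{\mathbb M}}(\hat S_d,\mathbb E)$, and the $e$-Laplace transform $T_{e,d}v$ of Proposition~\ref{prop316}—identified with the $\overline{\mathbb M}$-sum of $\hat u$ via Theorem~\ref{teo1}—delivers the conclusion.

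\textbf{Main obstacle.} The crux is Step~3: keeping the geometric constant $A$ bounded as the recursion is iterated through infinitely many indices~$p$. The value of $k$ in (\ref{eq:k-defi}) is precisely dictated by the requirement that the asymptotic gain $(M_n/M_{n+\kappa})^{s_1}$ from $\partial_{m_1,t}^{-\kappa}$ match the shift $(M_{p+p_\kappa}/M_p)^{s_2}$ produced by the top-order contribution, leaving a residue of exponent $s_2 p_\kappa/\kappa-s_1=1/k$ that pins down $\overline{\mathbb M}=(M_p^{1/k})$ as the right strongly regular sequence. Controlling the products of ratios $M_{n+i}/M_n$ and $M_{n+q}/M_n$ uniformly in $p$ will rely on Lemma~\ref{lem:prodM}, Lemma~\ref{lem:floor_quotient}, the $(mg)$ property, and condition~(\ref{eq:seq_property}); the detailed bookkeeping of these estimates is naturally the content announced for Section~\ref{secanexo}.
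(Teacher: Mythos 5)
Your overall strategy differs from the paper's, and it has a genuine gap at exactly the point you flag as ``the main obstacle.'' The paper does not induct on the coefficients $u_p(t)$ of the $z$-expansion. Instead it substitutes $\hat u=\sum_{n<p_\kappa}u_n(t)z^n+\partial_{m_2,z}^{-p_\kappa}\partial_{m_1,t}^{\kappa}\hat w$, so that (\ref{eq:main}) becomes a fixed-point equation $(1-\mathcal L)\hat w=\hat g$ with $\hat g$ $\overline{\mathbb M}$-summable by the hypotheses on $\hat f$ and (\ref{e498}), and then sums the Neumann series $\hat w=\sum_{p\ge0}\hat w_p$, $\hat w_{p+1}=\mathcal L\hat w_p$. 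The decisive estimates (Lemmas~\ref{lem:w_p-bound} and~\ref{lema589}) bound \emph{all} moment $t$-derivatives $\partial_{m_1,t}^n w_p$ simultaneously in the norm $\|\cdot\|_{\tilde r}$, so that the factor $|z|^{p}/m_2(p_\kappa p)$ accumulated from the $z$-integrations $\partial_{m_2,z}^{q-p_\kappa}$ beats the growth $M^{s_2}_{\lfloor np_\kappa/\kappa\rfloor+p_\kappa p}$ coming from the $t$-derivatives; the Borel transform $T^{-}_{e,d}$ is applied only once, to the total sum $w$, via (\ref{e667}). Your Steps 1--2 (coefficient-wise summability of each $u_p$ and $\partial_{m_1,t}^{-i}u_p$ by strong induction, using closure under moment differentiation) are plausible for each fixed $p$, but they yield no quantitative control.

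The gap is Step 3. The bound $\|V_p(\tau)\|_{\tilde r}\le CA^p m_2(p)\exp\bigl(\overline M(|\tau|/\tilde k)\bigr)$ with $A$ and $\tilde k$ independent of $p$ is asserted, not derived, and the natural way to obtain it from your recursion --- recovering $u_{p+p_\kappa}$ from $\partial_{m_1,t}^{-\kappa}u_{p+p_\kappa}$ by applying $\partial_{m_1,t}^{\kappa}$ and invoking the closure of summable series under moment differentiation --- degrades either the type constant $\tilde k$ or the opening of the sector at every pass; iterated infinitely often this gives nothing. You cannot point to Section~\ref{secanexo} for the missing bookkeeping, because Lemmas~\ref{lem:w_p-bound} and~\ref{lema589} estimate the Picard iterates $w_p$ of the transformed equation, not the $z$-coefficients $u_p$, and their mechanism (the polynomial $P_p$ generated by $\partial_{m_2,z}^{q-p_\kappa}$ acting on $1$, together with Remark~\ref{rem-p_k-p_i} and property (\ref{eq:seq_property})) has no counterpart in your coefficient-wise scheme. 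To close your argument you would need to prove an analogue of those two lemmas for the $V_p$, which is essentially redoing the paper's work in a less convenient coordinate system; as written, the proposal establishes only formal, coefficient-by-coefficient summability, not $\overline{\mathbb M}$-summability of $\hat u$ as an element of $\mathbb E[[t]]$.
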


\begin{proof}
We observe that $\overline{\mathbb{M}}$ is a strongly regular sequence which admits a pair of kernel functions for $\overline{\mathbb{M}}$-summability. Let $e$ and $E$ be such kernel functions. 

Let $z\in \overline{D}(0,r)$. For $\hat{v}(t,z):=\partial_{m_{2},z}^{p_{\kappa}}\hat{u}(t,z)$
we have 
\[
\hat{u}(t,z)=\sum_{n=0}^{p_{\kappa}-1}u_{n}(t)z^{n}+\partial_{m_{2},z}^{-p_{k}}\hat{v}(t,z).
\]
If we then put $\hat{v}(t,z)=\partial_{m_{1},t}^{\kappa}\hat{w}(t,z)$ for a certain $\hat{w}(t,z)$
then (\ref{eq:main}) takes a form 
\begin{equation}
\label{eq:g}
\left(1-b_{\kappa p_{\kappa}}(z)\partial_{m_{1},t}^{\kappa}\partial_{m_{2},z}^{-p_{\kappa}}+\sum_{i\in\Kk}\sum_{q\in Q_{i}}b_{iq}(z)\partial_{m_{1},t}^{\kappa-i}\partial_{m_{2},z}^{q-p_{\kappa}}\right)\hat{w}(t,z)=\hat{g}(t,z),
\end{equation}
with 
\[
\hat{g}(t,z)=\frac{1}{a_{\kappa p_{\kappa}}(z)}\left(\sum_{n=0}^{p_{\kappa}-1}u_{n}(t)z^{n}-\sum_{i\in\Kk}\sum_{q=0}^{p_{i}}\sum_{n=0}^{p_{\kappa}-1}a_{iq}(z)\partial_{m_{1},t}^{-i}\partial_{m_{2},z}^{q}u_{n}(t)z^{n}-\hat{f}(t,z)\right).
\]
The formal power series $\hat{g}(t,z)\in\mathbb{E}[[t]]$ turns out to be $\overline{\mathbb{M}}$-summable along direction $d$, in view of the hypotheses made on $\hat{f}(t,z)$ and (\ref{e498}). Let us denote by $g(t,z)\in\mathcal{O}(G\times \overline{D}(0,r))$ its sum, with $G$ being a sectorial region with bisecting direction $d$ and opening larger than $\pi\omega(\overline{\mathbb{M}})$. In (\ref{eq:g}) we denote the set $\left\{ 0,1,\dots,p_{\kappa}-1\right\} $ by $Q_{\kappa}$  and put $Q_{i}=\left\{ 0,1,\dots,p_{i}\right\} $ for any $1<i<\kappa$. Note that all $b_{iq}$ are holomorphic in $\overline{D}(0,r)$ and moreover
\[
b_{iq}(z)=\left\{ \begin{array}{rl}
\frac{1}{a_{\kappa p_{\kappa}}(z)} & \textrm{for }i=\kappa,\,q=p_{\kappa}\\
\frac{a_{iq}(z)}{a_{\kappa p_{\kappa}}(z)} & \textrm{otherwise. }
\end{array}\right.
\]

We write $\hat{w}(t,z)=\sum_{p\geq0}\hat{w}_{p}(t,z)$ with 
\begin{equation}
\left\{ \begin{array}{rl}
\hat{w}_{0}(t,z) & =\hat{g}(t,z)\\
\displaystyle\hat{w}_{p+1}(t,z) & =\displaystyle\left(b_{\kappa p_{\kappa}}(z)\partial_{m_{1},t}^{\kappa}\partial_{m_{2},z}^{-p_{\kappa}}-\sum_{i\in\Kk}\sum_{q\in Q_{i}}b_{iq}(z)\partial_{m_{1},t}^{\kappa-i}\partial_{m_{2},z}^{q-p_{\kappa}}\right)\hat{w}_{p}(t,z)\textrm{ for }p\geq0.
\end{array}\right.\label{eq:w_p-rec1}
\end{equation}
and also 
$w(t,z)=\sum_{p\geq0}w_{p}(t,z)$ with 
\begin{equation}
\left\{ \begin{array}{rl}
w_{0}(t,z) & =g(t,z)\\
\displaystyle w_{p+1}(t,z) & =\displaystyle\left(b_{\kappa p_{\kappa}}(z)\partial_{m_{1},t}^{\kappa}\partial_{m_{2},z}^{-p_{\kappa}}-\sum_{i\in\Kk}\sum_{q\in Q_{i}}b_{iq}(z)\partial_{m_{1},t}^{\kappa-i}\partial_{m_{2},z}^{q-p_{\kappa}}\right)w_{p}(t,z)\textrm{ for }p\geq0.
\end{array}\right.\label{eq:w_p-rec}
\end{equation}

We observe that $w_p(t,z)\in\mathcal{O}(G\times \overline{D}(0,r))$, and $w_p$ admits $\hat{w}_p(t,z)\in\mathbb{E}[[t]]$ as its $\overline{\mathbb{M}}$-asymptotic expansion in $G$. 

Therefore, $w_p$ is the $\overline{\mathbb{M}}$-sum of $\hat{w}_p(t,z)$ in $G$.

Note that $w_{p}(t,z)$ can be bounded from above with use of Lemma \ref{lem:integral2}
for every $p\ge1$. Indeed, we can describe $w_{1}(t,z)$ as a
finite sum of holomorphic functions of the form $b_{iq}(z)\partial_{m_{1},t}^{\kappa-i}\partial_{m_{2},z}^{q-p_{\kappa}}w_{0}(t,z)$. More precisely,
\begin{equation}\label{e546}
w_{1}(t,z)=b_{\kappa p_{\kappa}}(z)\partial_{m_{1},t}^{\kappa}\partial_{m_{2},z}^{-p_{\kappa}}w_{0}(t,z)-\sum_{i\in\mathcal{K}}\sum_{q\in Q_{i}}b_{iq}(z)\partial_{m_{1},t}^{\kappa-i}\partial_{m_{2},z}^{q-p_{\kappa}}w_{0}(t,z).
\end{equation}
Additionally, we have $B=\max_{i,q}\max_{\tilde{r}\leq r}\left\Vert b_{iq}(z)\right\Vert _{\tilde{r}}$
 and
\begin{equation}\label{e546b}
\|\partial_{m_{1},t}^{n}w_{0}(t,z)\|_{\tilde{r}}\leq CK^{n}M_{n}^{\nicefrac{s_{2}p_{\kappa}}{\kappa}}\frac{|z|^{0}}{m_{2}(0)}\textrm{ for all }n\in\NN_{0},\ z\in\overline{D},\ |z|=\tilde{r}\ \textrm{and}\ t\in G.
\end{equation}
From these facts and Lemma \ref{lem:integral}
we conclude that

\begin{equation}\label{e546c}
\left\Vert b_{iq}(z)\partial_{m_{1},t}^{\kappa-i}\partial_{m_{2},z}^{q-p_{\kappa}}w_{0}(t,z)\right\Vert _{\tilde{r}}\leq BC'K'^{\kappa-i}M_{\left\lfloor \nicefrac{(\kappa-i)p_{\kappa}}{\kappa}\right\rfloor }^{s_{2}}\frac{|z|^{p_{\kappa}-q}}{m_{2}(p_{\kappa}-q)},
\end{equation}
for $|z|=\tilde{r}$ and $t\in G$. Hence,

\begin{align}
\|w_{1}(t,z)\|_{\tilde{r}}&\le\sum_{i\in\mathcal{K}\cup\left\{ 0\right\} }\sum_{q\in Q_{i}}\|b_{iq}(z)\partial_{m_{1},t}^{\kappa-i}\partial_{m_{2},z}^{q-p_{\kappa}}w_{0}(t,z)\|_{\tilde{r}}\nonumber\\
&\leq\sum_{i\in\mathcal{K}\cup\left\{ 0\right\} }\sum_{q\in Q_{i}}BC'K'^{\kappa-i}M_{\left\lfloor \nicefrac{(\kappa-i)p_{\kappa}}{\kappa}\right\rfloor }^{s_{2}}\frac{|z|^{p_{\kappa}-q}}{m_{2}(p_{\kappa}-q)}\label{e546d}
\end{align}
and
\begin{equation}\label{e546e}
\|\partial_{m_{1},t}^{n}w_{1}(t,z)\|_{\tilde{r}}\le\sum_{i\in\mathcal{K}\cup\left\{ 0\right\} }\sum_{q\in Q_{i}}BCK^{n+\kappa-i}M_{\left\lfloor \nicefrac{(n+\kappa-i)p_{\kappa}}{\kappa}\right\rfloor }^{s_{2}}\frac{|z|^{p_{\kappa}-q}}{m_{2}(p_{\kappa}-q)}.
\end{equation}
We can repeat this reasoning for all $p\geq1$ as seen in the proof of Lemma \ref{lem:w_p-bound}, which is postponed to Section~\ref{secanexo}.

\begin{lemma}\label{lem:w_p-bound}

Let $z\in D\left(0,r\right)$ with $0<r<1$. There exist constants $0<C',K',B'<\infty$ such that 
\begin{equation}
\left\Vert \partial_{m_{1},t}^{n}w_{p}(t,z)\right\Vert _{\tilde{r}}\leq B'^{p}C'K'^{n+\kappa p}M_{\left\lfloor \nicefrac{np_{\kappa}}{\kappa}\right\rfloor +p_{\kappa}p}^{s_{2}}P_{p}(|z|)\textrm{ for all }n\in\NN_{0},\ \tilde{r}=|z|,\label{eq:w_p}
\end{equation}
for all $t\in G$, where $P_{p}(z)$ is a polynomial given by the recursive formula

\[
\left\{\begin{array}{rl}
P_{0}(z) & =1\\
P_{p+1}(z) & =\displaystyle\left[\partial_{m_{2},z}^{-p_{\kappa}}+\sum_{i\in\Kk'}\sum_{q\in Q_{i}}\frac{M_{p_{\kappa}p}^{s_{2}}}{M_{p_{\kappa}p+p_{i}}^{s_{2}}}\partial_{m_{2},z}^{q-p_{\kappa}}\right]P_{p}(z)\textrm{ for all }p\ge0,
\end{array}\right.
\]
where $\Kk'=\{i\in\Kk:\ p_i\ge 1\}$.
\end{lemma}

\begin{lemma}\label{lema589}

For every $z\in D(0,r)$, with $0<r<1,$ and $p\in\NN_{0}$ we have
\begin{equation}
P_{p}(|z|)\le F^{p}\frac{|z|^{p}}{m_{2}(p_{\kappa}p)}\label{eq:P_p-bound}
\end{equation}
for certain positive constant $F$.
\end{lemma}

\begin{remark}
The above lemmas are the moment versions of Propositions 4 and 5, \cite{remy2017}.
\end{remark}

We combine Lemma~\ref{lem:w_p-bound} and Lemma~\ref{lema589} to receive 
\[
\left\Vert \partial_{m_{1},t}^{n}w_{p}(t,z)\right\Vert _{\tilde{r}}\leq B'^{p}C'K'^{n+\kappa p}M_{\left\lfloor \nicefrac{np_{\kappa}}{\kappa}\right\rfloor +p_{\kappa}p}^{s_{2}}F^{p}\frac{|z|^{p}}{m_{2}(p_{\kappa}p)}\textrm{ for all }n,p\in\NN_{0},\ \tilde{r}=|z|,
\]
and $t\in G$. Using Lemma \ref{lem:floor_quotient} we conclude that there exist
constants $C'',K''>0$ such that

\[
\left\Vert \partial_{m_{1},t}^{n}w_{p}(t,z)\right\Vert _{\tilde{r}}\leq B'^{p}C''K''^{n+\kappa p}M_{n+\kappa p}^{\nicefrac{s_{2}p_{\kappa}}{\kappa}}F^{p}\frac{|z|^{p}}{m_{2}(p_{\kappa}p)}\textrm{ for all }n,p\in\NN_{0},\ \tilde{r}=|z|,\ t\in G.
\]

From this it follows that 
\[
\sum_{p\ge0}\left\Vert \partial_{m_{1},t}^{n}w_{p}(t,z)\right\Vert _{\tilde{r}}\le C''K''^{n}M_{n}^{\nicefrac{s_{2}p_{\kappa}}{\kappa}}\sum_{p\ge0}\frac{M_{n+\kappa p}^{\nicefrac{s_{2}p_{\kappa}}{\kappa}}}{M_{n}^{\nicefrac{s_{2}p_{\kappa}}{\kappa}}m_{2}(p_{\kappa}p)}\left(B'K''^{\kappa}F|z|\right)^{p},
\]
for all $t\in G$. From Lemma \ref{lem:floor_quotient} and condition (lc)
it can be concluded that there exists a positive constant $\tilde{A}_{2}$
for which the inequality 
\[
\frac{M_{n+\kappa p}^{\nicefrac{s_{2}p_{\kappa}}{\kappa}}}{M_{n}^{\nicefrac{s_{2}p_{\kappa}}{\kappa}}m_{2}(p_{\kappa}p)}\le\tilde{A}_{2}^{n+\kappa p}
\]
holds. Hence, 
\begin{equation}
\sum_{p\ge0}\left\Vert \partial_{m_{1},t}^{n}w_{p}(t,z)\right\Vert _{\tilde{r}}\le C''K''^{n}\tilde{A}_{2}^{n}M_{n}^{\nicefrac{s_{2}p_{\kappa}}{\kappa}}\sum_{p\ge0}\left(B'K''^{\kappa}\tilde{A}_{2}^{\kappa}F|z|\right)^{p}.\label{eq:sum-w_p-bound}
\end{equation}

Let us now put $\rho=\min\left\{ r,\ B'^{-1}K''^{-\kappa}\tilde{A}_{2}^{-\kappa}F^{-1}\right\} $.
Then for all $z\in D(0,\rho)$ the last sum on the right-hand side
of (\ref{eq:sum-w_p-bound}) is finite and there exists a constant
$L>0$ such that 
\[
\sum_{p\ge0}\left(B'K''^{\kappa}\tilde{A}_{2}^{\kappa}F|z|\right)^{p}\leq L\textrm{ for }|z|<\rho.
\]
From this it follows that 
\[
\sum_{p\ge0}\left\Vert \partial_{m_{1},t}^{n}w_{p}(t,z)\right\Vert _{\tilde{r}}\le C''LK''^{n}\tilde{A}_{2}^{n}M_{n}^{\nicefrac{s_{2}p_{\kappa}}{\kappa}}\textrm{ for }z\in D(0,\rho),\ |z|=\tilde{r}.
\]

If we now put $\tilde{C}=C''L$ and $\tilde{K}=K''\tilde{A}_{2}$
then we can conclude that 
\begin{equation}\label{e667}
\left\Vert \partial_{m_{1},t}^{n}w(t,z)\right\Vert _{\tilde{r}}\le\tilde{C}\tilde{K}M_{n}^{\nicefrac{s_{2}p_{\kappa}}{\kappa}}\textrm{ for }z\in D(0,\rho),\ |z|=\tilde{r},\ n\in\NN_{0}.
\end{equation}

At this point, we prove that $w(t,z)$ is indeed the $\overline{\mathbb{M}}$-sum of $\hat{w}(t,z)$ defined by
$$\hat{w}(t,z)=\sum_{p\ge0}\hat{w}_p(t,z)\in\mathbb{E}[[t]]$$
following direction $d$. Let us choose a kernel function for $\overline{\mathbb{M}}$-summability, say $e$. One has that 
$$w_p(t,z)=T_{e,d}\hat{\mathcal{B}}_{m_e,t}\hat{w}_p(t,z),\qquad \omega(t,z)=\sum_{q\ge0}T_{e,d}\hat{\mathcal{B}}_{m_e,t}\hat{w}_p(t,z).$$
In view of (\ref{e667}), it follows that $T^{-}_{e,d}w(t,z)\in\mathcal{O}(D(0,r_1)\times \overline{D}(0,r))$, for some positive $r_1$. In view of Proposition~\ref{prop316}, we have that $T^{-}_{e,d}w(t,z)\in\mathcal{O}^{\overline{\mathbb{M}}}(S_d,\mathbb{E})$, for some infinite sector $S_d$ with bisecting direction $d$. Therefore, $T^{-}_{e,d}w(t,z)\in\mathcal{O}^{\overline{\mathbb{M}}}(\hat{S}_d,\mathbb{E})$.
The convergence of the series defining $w(t,z)$ together with Theorem~\ref{teo1} yield
$$T^{-}_{e,d}w(t,z)=T^{-}_{e,d}\sum_{p\ge0}T_{e,d}\hat{\mathcal{B}}_{m_e,t}(\hat{w}_p(t,z))=T^{-}_{e,d}T_{e,d}\sum_{p\ge0}\hat{\mathcal{B}}_{m_e,t}(\hat{w}_p(t,z))=\hat{\mathcal{B}}_{m_e,t}\hat{w}(t,z).$$
Therefore $\hat{\mathcal{B}}_{m_e,t}\hat{w}(t,z)\in\mathcal{O}^{\overline{\mathbb{M}}}(\hat{S}_d\times \overline{D}(0,r))$ concluding that $\hat{w}(t,z)$ is $\overline{\mathbb{M}}$-summable along direction $d$, with sum given by $w(t,z)$. 

We conclude $\overline{\mathbb{M}}$-summability of $u(t,z)\in\mathbb{E}[[t]]$ by observing that 
$$\hat{u}(t,z)=\sum_{n=0}^{p_{\kappa}-1}u_n(t)z^n+\partial_{m_2,z}^{-p_{\kappa}}\partial_{m_1,t}^{\kappa}\hat{w}(t,z),$$
and the fact that $u_n(t)$ for $n=0,\ldots,p_{\kappa}-1$ and $\partial_{m_2,z}^{-p_{\kappa}}\partial_{m_1,t}^{\kappa}\hat{w}(t,z)$ are $\overline{\mathbb{M}}$-summable along direction $d$.
\end{proof}

A more accurate result can be stated under Assumption (A):

\vspace{0.3cm}

\noindent \textbf{Assumption (A)}: The set of $\overline{\mathbb{M}}$-summable formal power series along direction $d$ is closed under the action of the operator $\partial_{m_1,t}^{-1}$, i.e., given a complex Banach space $\mathbb{F}$ and $\hat{u}(t)\in\mathbb{F}[[t]]$ which is $\overline{\mathbb{M}}$-summable along direction $d\in\R$, let $u(t)$ be its $\overline{\mathbb{M}}$-sum along direction $d$, then the formal power series $\partial_{m_1,t}^{-1}\hat{u}(t)$ is $\overline{\mathbb{M}}$-summable along direction $d\in\R$, whose $\overline{\mathbb{M}}$-sum along direction $d$, say $F_{d}(t)$, satisfies that $\partial_{m_1,t}F_{d}(t)$ coincides with $u(t)$.

\vspace{0.3cm}

Here, the application of $\partial_{m_1,t}$ to the $\mathbb{\overline{M}}$-sum of a formal power series is understood as in~\cite{LMS2}, Definition 10. Observe that in that preceding work, the correctness and suitability of such definition has been discussed.

\begin{defin}[Definition 10,~\cite{LMS2}]
Let $\mathbb{F}$ be a Banach space. Let $\mathbb{M}$ be a strongly regular sequence admitting a nonzero proximate order. Assume that $\hat{u}(t)\in\mathbb{F}[[t]]$ is $\mathbb{M}$-summable along direction $d\in\R$, with $\mathbb{M}$-sum given by $u(t)$. Let $m$ be a sequence of moments. The $m$-moment differentiation of $u(t)$ is defined by
$$\partial_{m,t}(u):=w,$$
with $w$ being the $\mathbb{M}$-sum of the formal power series $\partial_{m,t}(\hat{u}(t))\in\mathbb{F}[[t]]$ along direction $d$.
\end{defin}

A first direct consequence on Assumption (A) is that a recursion argument can be followed to guarantee that, under this assumption, one has that given $\hat{u}(t)\in\mathbb{F}[[t]]$ which is $\overline{\mathbb{M}}$-summable along direction $d\in\R$, with $\overline{\mathbb{M}}$-sum along direction $d$ given by $u(t)$, then for all $i\in\N$ the formal power series $\partial_{m_1,t}^{-i}\hat{u}(t)$ is $\overline{\mathbb{M}}$-summable along direction $d\in\R$, whose $\overline{\mathbb{M}}$-sum along direction $d$, say $F_{d,i}(t)$, satisfies that $\partial_{m_1,t}^{i}F_{d,i}(z)$ coincides with $u(t)$.

Observe that under Assumption (A) the condition (\ref{e498}) is reduced to assumption that $u_n(t)$ is $\overline{\mathbb{M}}$-summable along direction $d\in\R$ for $n\in\{0,\ldots,p_{\kappa}-1\}$.

\begin{theo}\label{coropral}
Under the hypotheses made on Theorem~\ref{teopral} and Assumption (A) the following statements are equivalent:
\begin{itemize}
\item[1.]  $\hat{f}(t,z)\in\mathbb{E}[[t]]$ and $u_n(t)\in\C[[t]]$ for $n\in\{0,\ldots,p_{\kappa}-1\}$ are $\overline{\mathbb{M}}$-summable along direction $d$.
\item[2.] The formal solution of (\ref{eq:main}) $\hat{u}(t,z)\in\mathbb{E}[[t]]$ is $\overline{\mathbb{M}}$-summable along direction $d$, and its sum is an analytic solution of (\ref{eq:main}), with the formal power series $\hat{f}(t,z)$ replaced by its sum $f(t,z)$.
\end{itemize} 
\end{theo}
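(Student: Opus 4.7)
The plan is to reduce the equivalence to Theorem~\ref{teopral} for the forward direction, and to use the closure of $\overline{\mathbb{M}}$-summable series under moment differentiation (Proposition~\ref{prop1}), under moment integration (Assumption (A)), and under Taylor coefficient extraction in $z$ for the converse.

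For the implication $1 \Rightarrow 2$, I would first invoke Assumption (A) inductively in $i$: since each $u_n(t)$ is $\overline{\mathbb{M}}$-summable along $d$, so is $\partial_{m_1,t}^{-i} u_n(t)$ for every $i \in \mathcal{K}$. Thus condition (\ref{e498}) in Theorem~\ref{teopral} is automatically satisfied, so $\hat{u}(t,z) \in \mathbb{E}[[t]]$ is $\overline{\mathbb{M}}$-summable along $d$. Denote its sum by $u(t,z)$. To check that $u(t,z)$ solves (\ref{eq:main}) with $\hat{f}$ replaced by its sum $f(t,z)$, the idea is that $P$ preserves summable series and commutes with the summation map: multiplication by the holomorphic coefficients $a_{iq}(z)$, the moment differentiation $\partial_{m_2,z}^q$ (Proposition~\ref{prop1}), and the moment integration $\partial_{m_1,t}^{-i}$ (Assumption (A)) each carry summable series to summable series whose sum is obtained by applying the corresponding analytic operator to the original sum. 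Since $P\hat{u} = \hat{f}$ formally, this yields $Pu = f$ in the analytic sense.

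For the implication $2 \Rightarrow 1$, the summability of $\hat{u}(t,z)$ transfers immediately to $\hat{f}(t,z) = P\hat{u}(t,z)$ by the same closure argument. To recover summability of each $u_n(t)$ for $n \in \{0,\ldots,p_\kappa - 1\}$, I would observe that the Taylor coefficient extraction $\varphi_n : \mathbb{E} \to \mathbb{C}$, $\varphi_n(g) = \frac{1}{2\pi i}\oint_{|\zeta|=r}\frac{g(\zeta)}{\zeta^{n+1}}\,d\zeta$, is a continuous linear functional with $|\varphi_n(g)| \le r^{-n}\|g\|_{\mathbb{E}}$. Applying $\varphi_n$ term by term to the asymptotic-expansion inequalities that witness $\overline{\mathbb{M}}$-summability of $\hat{u}$ yields the same type of inequalities for $u_n(t)$, establishing its summability along $d$.

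The main obstacle will be justifying the commutativity between $P$ and the summation map in the analytic identity $Pu = f$: while Proposition~\ref{prop1} handles $\partial_{m_2,z}^q$ (and multiplication by analytic functions is routine), the operator $\partial_{m_1,t}^{-i}$ has no \emph{a priori} reason to preserve summability, and Assumption (A) is precisely the hypothesis that unlocks this step. Once it is available, the remaining verifications reduce to routine transport of identities through the Laplace--Borel characterization of $\overline{\mathbb{M}}$-summability recalled in Proposition~\ref{prop316} and Theorem~\ref{teo1}.
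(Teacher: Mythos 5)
Your proposal is correct and follows essentially the same route as the paper: Theorem~\ref{teopral} together with Assumption (A) gives the summability in $1\Rightarrow 2$, uniqueness of sums transports the formal identity $P\hat{u}=\hat{f}$ to the analytic level, and closure properties plus coefficient extraction handle $2\Rightarrow 1$. The only differences are presentational: the paper makes your ``$P$ commutes with summation'' step precise by observing that $t\mapsto u(t,z)-\sum_{i\in\Kk}\sum_{q=0}^{p_i}a_{iq}(z)\partial_{m_2,z}^{q}U_i(t,z)-f(t,z)$ admits the null asymptotic expansion in a wide sector and invoking Watson's lemma, while it leaves implicit the continuous-linear-functional argument for the $u_n(t)$ that you spell out.
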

\begin{proof}
The first part of the first implication ($1.\Rightarrow 2.$) is a direct consequence of Theorem~\ref{teopral} together with Assumption (A). For the second, we consider the function 
$$t\mapsto u(t,z)-\sum_{i\in\Kk}\sum_{q=0}^{p_i}a_{iq}(z)\partial_{m_2,z}^{q}U_i(t,z)-f(t,z),$$
where $U_i(t,z)$ stands for a function satisfying $\partial_{m_1,t}^{i}U_i(t,z)=u(t,z)$. Such function admits the null power series as its $\overline{\mathbb{M}}$-asymptotic expansion in a sector of bisecting direction $d$ and opening larger than $\omega(\overline{\mathbb{M}})\pi$. Watson's lemma yields the conclusion.

The implication ($2.\Rightarrow 1.$) is a consequence of the assumption (A), and the usual properties of the set of functions admitting asymptotic expansion in a sector.

\end{proof}

\subsection{On the scope of Assumption (A)}\label{secscopeA}

We conclude the work by providing concrete examples of families of strongly regular sequences in which Assumption (A) holds. Therefore, the stronger version of Theorem~\ref{teopral}, Theorem~\ref{coropral}, holds when dealing with such sequences.  

The most outstanding family of strongly regular sequences satisfying Assumption (A) is that of Gevrey sequences of a fixed positive order. The spaces of functions whose growth is related to this behavior is essential in the study of differential equations.

\begin{prop}\label{prop732a} 
Let $\alpha>0$. Let $m=(\Gamma(1+p))_{p\ge0}$ and consider the sequence $\overline{\mathbb{M}}:=\mathbb{M}_{\alpha}=(p!^{\alpha})_{p\ge0}$. Then, Assumption (A) is satisfied.
\end{prop}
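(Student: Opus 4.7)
The plan is to identify the moment structure in this Gevrey setting with classical calculus and then invoke the well-known stability of $\alpha$-Gevrey summability under ordinary integration. Since $m_1(p) = \Gamma(1+p) = p!$, the definition of moment (anti)differentiation gives $\partial_{m_1,t} = \partial_t$ and
\[
\partial_{m_1,t}^{-1}\Bigl(\sum_{p\ge 0} a_p t^p\Bigr) = \sum_{p\ge 0}\frac{a_p}{p+1}\, t^{p+1},
\]
the formal primitive vanishing at $0$. Moreover $\omega(\mathbb{M}_\alpha) = \alpha$, so $\mathbb{M}_\alpha$-summability along $d$ coincides with classical $\alpha$-Gevrey summability along $d$.

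Let $\hat{u}(t) = \sum_{p \ge 0} a_p t^p \in \mathbb{F}[[t]]$ be $\mathbb{M}_\alpha$-summable along $d$, with sum $u \in \mathcal{O}(G_d(\theta),\mathbb{F})$ for some sectorial region of opening $\theta > \alpha\pi$. I would define
\[
U(t) := \int_0^t u(s)\,ds,
\]
the integral being taken along any path in $G_d(\theta)$ joining the origin to $t$. This is well-defined: the $N=1$ case of the asymptotic expansion gives $\|u(s)-a_0\|_{\mathbb{F}}=O(|s|)$ near $0$, so $u$ is integrable at the origin, and path-independence follows from simple connectivity of $G_d(\theta)$. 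In particular $U \in \mathcal{O}(G_d(\theta),\mathbb{F})$ and $\partial_t U = u$.

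The central step is to check that $U$ admits $\hat{U}(t) := \partial_{m_1,t}^{-1}\hat{u}(t)$ as its $\mathbb{M}_\alpha$-asymptotic expansion in $G_d(\theta)$. On any truncated subsector $S_d(\theta') \cap D(0,r') \subseteq G_d(\theta)$ the Gevrey bound $\|u(t) - \sum_{p=0}^{N-1} a_p t^p\|_{\mathbb{F}} \le C A^N (N!)^\alpha |t|^N$ holds, and integrating the remainder termwise along the straight segment from $0$ to $t$ yields
\[
\Bigl\|U(t) - \sum_{p=0}^{N-1}\frac{a_p}{p+1}\,t^{p+1}\Bigr\|_{\mathbb{F}} \le \frac{C A^N (N!)^\alpha}{N+1}\,|t|^{N+1} \le C' A^{N+1}((N+1)!)^\alpha |t|^{N+1},
\]
using the trivial inequality $(N!)^\alpha/(N+1) \le ((N+1)!)^\alpha$. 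After relabelling $N+1 \to N$ this is precisely the $\mathbb{M}_\alpha$-asymptotic estimate for $U$ with respect to $\hat{U}$.

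Since the opening $\theta > \alpha\pi = \omega(\mathbb{M}_\alpha)\pi$ is preserved by the construction, $\hat{U}$ is $\mathbb{M}_\alpha$-summable along direction $d$ with $\mathbb{M}_\alpha$-sum equal to $U$, and the identity $\partial_t U = u$ translates, via the definition of $\partial_{m_1,t}$ acting on sums recalled from~\cite{LMS2}, into the condition required by Assumption~(A). The argument is essentially routine; the only mildly delicate points are the integrability of $u$ at the origin and the path-independence of the primitive, both handled by soft properties of sectorial regions together with the first-order term of the asymptotic expansion.
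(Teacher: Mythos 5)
Your argument is correct. The paper itself gives no proof here --- it simply defers to Theorem 20 of Balser's book, which is exactly the classical statement that $\alpha$-Gevrey summability is stable under taking the primitive vanishing at the origin; your write-up (identifying $\partial_{m_1,t}^{-1}$ with $\int_0^t$, integrating the remainder estimate along a segment, and absorbing the factor $(N!)^{\alpha}/(N+1)\le((N+1)!)^{\alpha}$ after shifting the index) is precisely the standard proof of that cited result, so it matches the paper's intent while being self-contained. The only cosmetic omission is the $N=1$ estimate $\|U(t)\|_{\mathbb{F}}\le\sup\|u\|\,|t|$ after reindexing, which follows from the boundedness of $u$ near the origin that you already established.
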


The proof of the previous result can be found in Theorem 20~\cite{balser}. 

A second example of strongly regular sequences satisfying Assumption (A) is related to fractional derivatives and its application to the formal and analytic solutions of fractional partial differential equations. We first give some details about such operators and equations, which have been previously applied to such equations in~\cite{michalik10,michalik12,kilbas10}, and also refer to~\cite{kilbasetal06} and the references therein in order to deepen into this theory.

\begin{defin}
Let $\alpha\in\mathbb{Q}_+$. The fractional derivative of order $\alpha$ is the formal operator $\partial_z^{\alpha}:\mathbb{C}[[z^{\alpha}]]\to\mathbb{C}[[z^{\alpha}]]$, defined by
\begin{equation}\label{e739}
\partial_z^{\alpha}\left(\sum_{p\ge0}\frac{a_p}{\Gamma(1+\alpha p)}z^{\alpha p}\right)=\sum_{p\ge 0}\frac{a_{p+1}}{\Gamma(1+\alpha p)}z^{\alpha p}.
\end{equation}
\end{defin}

Observe that the formal fractional derivative turns out to be the usual derivative in the case that $\alpha=1$. Moreover, the fractional derivative of order $1/k$, for some fixed $k>0$, turns out to be the Caputo fractional derivative for $1/k$-analytic functions (see~\cite{kilbasetal06}) , applying the theory to fractional partial differential equations. The previous formal operator can be related to the moment differentiation in the following way. Given $k>0$, one has 
\begin{equation}\label{e748}
(\partial_{m_{1/k},z}\hat{f})(z^{1/k})=\partial_z^{1/k}(\hat{f}(z^{1/k})),
\end{equation}
with $m_{1/k}=(\Gamma(1+p/k))_{p\ge0}$. 

In view of (\ref{e739}), the formal $\alpha$-integral operator should satisfy
$$\partial_z^{-\alpha}z^{\alpha p}=\frac{\Gamma(1+\alpha p)}{\Gamma(1+\alpha(p+1))}z^{\alpha(p+1)}$$
for all $p\in\N_0$. Such property is satisfied by the Riemann-Liouville fractional integral defined by
\begin{equation}\label{e756}
I^{\alpha}_{0+}f(z):=\frac{1}{\Gamma(\alpha)}\int_0^z\frac{f(t) dt}{(z-t)^{1-\alpha}}.
\end{equation}
Indeed, for all $\beta>-1$ one has
$$I^{\alpha}_{0+}z^{\beta}=\frac{\Gamma(1+\beta)}{\Gamma(1+\alpha+\beta)}z^{\alpha+\beta},$$
see (2.2.1) and (2.2.10) in~\cite{kilbasetal06}.

The next result generalizes Proposition~\ref{prop732a}.

\begin{prop}\label{prop732b} Let $k>0$. Let $m=m_{1/k}=\Gamma(1+p/k))_{p\ge0}$ and consider a strongly regular sequence $\overline{\mathbb{M}}$. Then, Assumption (A) is satisfied.
\end{prop}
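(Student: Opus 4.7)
The plan is to represent the moment integration $\partial_{m_{1/k},t}^{-1}$ as a Riemann--Liouville-type integral operator, and then to estimate the remainder of its action on the asymptotic expansion directly via a Beta-integral computation. The bridge between the formal $m_{1/k}$-calculus and the Riemann--Liouville integral is exactly the identity (\ref{e748}): if $\hat{v}:=\partial_{m_{1/k},t}^{-1}\hat{f}$, then $g(z):=\hat{v}(z^{1/k})$ satisfies $\partial_z^{1/k}g(z)=\hat{f}(z^{1/k})$, so by (\ref{e756}) $g=I^{1/k}_{0+}[\hat{f}(\cdot^{1/k})]$. Setting $z=t^k$ and substituting $\zeta=\tau^k$ in the resulting integral yields
$$
\partial_{m_{1/k},t}^{-1}\hat{f}(t)=\frac{k}{\Gamma(1/k)}\int_{0}^{t}\frac{\tau^{k-1}\hat{f}(\tau)}{(t^k-\tau^k)^{1-1/k}}\,d\tau,
$$
which, as a check on $\hat{f}(\tau)=\tau^p$, gives $\frac{\Gamma(1+p/k)}{\Gamma(1+(p+1)/k)}t^{p+1}$ via the change of variables $\tau=tu^{1/k}$ and the Beta integral, matching the definition of $\partial_{m_{1/k},t}^{-1}$.

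Given $\hat{u}(t)\in\mathbb{F}[[t]]$ which is $\overline{\mathbb{M}}$-summable along $d$, let $u(t)$ be its $\overline{\mathbb{M}}$-sum in a sectorial region $G_{d}(\theta)$ with $\theta>\omega(\overline{\mathbb{M}})\pi$, and define
$$
F_{d}(t):=\frac{k}{\Gamma(1/k)}\int_{0}^{t}\frac{\tau^{k-1}u(\tau)}{(t^k-\tau^k)^{1-1/k}}\,d\tau,
$$
integrated along the segment $[0,t]$. For $t$ in any subregion $G'_{d}(\theta')\prec G_{d}(\theta)$ this segment lies in $G_{d}(\theta)$, the singularity at $\tau=t$ is integrable since $1-1/k<1$ (and the case $k\le 1$ is handled by contour deformation into a ray in direction $\arg t$), and varying $\arg t$ over an open interval of length $\theta$ produces a holomorphic function on a sectorial region of the same opening $\theta>\omega(\overline{\mathbb{M}})\pi$. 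Moreover, differentiating the power series termwise shows that the formal $\overline{\mathbb{M}}$-asymptotic expansion of $F_{d}$ (to be verified in the next step) equals $\partial_{m_{1/k},t}^{-1}\hat{u}(t)$, and applying $\partial_{m_{1/k},t}$ to $F_{d}$ (in the sense of~\cite{LMS2}, Definition 10) returns $u(t)$.

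The central estimate is the asymptotic bound. Writing $R_{N}(\tau):=u(\tau)-\sum_{p=0}^{N-1}a_{p}\tau^{p}$, linearity of the integral gives
$$
F_{d}(t)-\sum_{p=0}^{N-1}a_{p}\partial_{m_{1/k},t}^{-1}(t^{p})=\frac{k}{\Gamma(1/k)}\int_{0}^{t}\frac{\tau^{k-1}R_{N}(\tau)}{(t^k-\tau^k)^{1-1/k}}\,d\tau.
$$
Parametrising $\tau=st$ for $s\in[0,1]$, using $\|R_{N}(\tau)\|_{\mathbb{F}}\le CA^{N}\overline{M}_{N}|\tau|^{N}$ on subsectors, and performing the substitution $u=s^{k}$ reduces the remainder to a Beta integral and produces
$$
\bigl\|F_{d}(t)-\textstyle\sum_{p=0}^{N-1}a_{p}\partial_{m_{1/k},t}^{-1}(t^{p})\bigr\|_{\mathbb{F}}\le CA^{N}\overline{M}_{N}\,\frac{\Gamma(1+N/k)}{\Gamma(1+(N+1)/k)}\,|t|^{N+1}.
$$
The Gamma ratio grows at most polynomially in $N$, hence is absorbed into a geometric factor $(A')^{N+1}$, and $\overline{M}_{N}\le\overline{M}_{N+1}$ by monotonicity of any strongly regular sequence. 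This yields $\overline{\mathbb{M}}$-type bounds of order $N+1$, which is exactly the $\overline{\mathbb{M}}$-asymptotic expansion of $F_{d}$ with coefficients $(\Gamma(1+p/k)/\Gamma(1+(p+1)/k))a_{p}$.

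The main obstacle is the geometric step: verifying that $F_{d}$ extends holomorphically to a sectorial region with opening strictly larger than $\omega(\overline{\mathbb{M}})\pi$ (so that the summability definition applies), uniformly in the choice of bisecting direction near $d$. This requires contour deformation of the segment $[0,t]$ within $G_{d}(\theta)$ as $\arg t$ varies, which is routine but must be combined with the integrability of the endpoint singularity. Once this is settled, Watson's lemma, the Beta-integral estimate above, and the formal identification $\partial_{m_{1/k},t}F_{d}=u$ (coming from Proposition~\ref{prop1} and the matching of asymptotic coefficients) together yield Assumption (A).
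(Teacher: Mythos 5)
Your proposal is correct and follows essentially the same route as the paper: both represent $\partial_{m_{1/k},t}^{-1}$ as the Riemann--Liouville integral via the identities (\ref{e748}) and (\ref{e756}), express the partial sums of the expansion as Beta integrals of monomials, and bound the remainder integral by the asymptotic estimate for $u$ combined with a Beta-function computation, absorbing the resulting Gamma ratio into the geometric constant. The only difference is cosmetic --- you work directly in the variable $t$ after the substitution $\zeta=\tau^{k}$, while the paper works in $z=t^{k}$ --- and you are somewhat more explicit about the holomorphy of the integral on the sectorial region and the identification $\partial_{m_{1/k},t}F_{d}=u$, both of which the paper leaves implicit.
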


\begin{proof} Let us write $\overline{\mathbb{M}}=(\overline{M}_p)_{p\ge0}$. Let $G$ be a sectorial region with vertex at the origin and opening larger than $\pi\omega(\overline{\mathbb{M}})$, and fix a sector $T\subseteq G$. Let $f\in\mathcal{O}(G,\mathbb{F})$ and assume that $f$ is the $\overline{\mathbb{M}}$-sum of the formal power series $\hat{f}(z)=\sum_{p\ge0}\frac{f_p}{\Gamma\left(1+\frac{p}{k}\right)}z^p\in\mathbb{F}[[z]]$ in $G$. Taking into account (\ref{e748}) and (\ref{e756}), we estimate
\begin{equation}\label{e770}
\left\|\frac{1}{\Gamma\left(\frac{1}{k}\right)}\int_0^{z}\frac{f(t^{1/k})}{(z-t)^{1-\frac{1}{k}}}dt-\sum_{j=1}^{N-1}\frac{f_{j-1}}{\Gamma\left(1+\frac{j}{k}\right)}z^{j/k}\right\|_{\mathbb{F}},
\end{equation}
for any $z\in T$ and $N\ge 2$. 
Following Appendix B~\cite{balser}, we write
$$z^{j/k}=\int_0^z\frac{(z-t)^{\frac{1}{k}-1}t^{\frac{j-1}{k}}}{B\left(\frac{1}{k},\frac{j-1}{k}+1\right)},$$
where $B(\cdot,\cdot)$ stands for the so-called Beta Integral, with
$$B\left(\frac{1}{k},\frac{j-1}{k}+1\right)=\frac{\Gamma\left(\frac{1}{k}\right)\Gamma\left(\frac{j-1}{k}+1\right)}{\Gamma\left(\frac{j}{k}+1\right)}.$$
Therefore, the expression in (\ref{e770}) can be rewritten in the form
\begin{equation}\label{e779}
\left\|\frac{1}{\Gamma\left(\frac{1}{k}\right)}\int_0^{z}\frac{t^{-\frac{N-1}{k}}\left(f(t^{1/k})-\sum_{j=1}^{N-1}\frac{f_{j-1}}{\Gamma\left(\frac{j-1}{k}+1\right)}t^{\frac{j-1}{k}}\right)}{t^{-\frac{N-1}{k}}(z-t)^{1-\frac{1}{k}}}\right\|_{\mathbb{F}}.
\end{equation}
On the one hand, the existence of an asymptotic expansion yields the existence of $\tilde{C},\tilde{A}>0$ such that
\begin{align*}
\left\|t^{-\frac{N-1}{k}}\left(f(t^{1/k})-\sum_{j=1}^{N-1}\frac{f_{j-1}}{\Gamma\left(\frac{j-1}{k}+1\right)}t^{\frac{j-1}{k}}\right)\right\|_{\mathbb{F}}&=\left\|t^{-\frac{N-1}{k}}\left(f(t^{1/k})-\sum_{j=0}^{N-2}\frac{f_{j}}{\Gamma\left(\frac{j}{k}+1\right)}t^{\frac{j}{k}}\right)\right\|_{\mathbb{F}}\\
&\le \tilde{C}\tilde{A}^{N-1}\overline{M}_{N-1}.
\end{align*}
On the other hand, the change of variable $t=z s$ yields
$$\int_0^z\frac{t^{\frac{N-1}{k}}}{(z-t)^{1-\frac{1}{k}}}dt=\int_0^1\frac{z^{\frac{N-1}{k}}s^{\frac{N-1}{k}}z}{z^{1-\frac{1}{k}}(1-s)^{1-\frac{1}{k}}}ds=z^{\frac{N}{k}}\int_0^1\frac{s^{\frac{N-1}{k}}}{(1-s)^{1-\frac{1}{k}}}ds=z^{\frac{N}{k}}\frac{\Gamma\left(\frac{1}{k}\right)\Gamma\left(1+\frac{N-1}{k}\right)}{\Gamma\left(1+\frac{N}{k}\right)},$$
for all $z\in T$. 
This entails that (\ref{e779}) can be upper estimated by
$$\tilde{C}\tilde{A}^{N-1}\overline{M}_{N-1}|z|^{\frac{N}{k}}\frac{\Gamma\left(1+\frac{N-1}{k}\right)}{\Gamma\left(1+\frac{N}{k}\right)}\le \tilde{C}_1\tilde{A}_1^{N}\overline{M}_{N}|z|^{\frac{N}{k}},$$
for some appropriate $\tilde{C}_1,\tilde{A}_1>0$. This concludes the result.
\end{proof}

\section{Auxiliary results}\label{secanexo}
In this final section we collect the proofs of some auxiliary lemmas involved in that of Theorem~\ref{teopral}, which have been set aside in order not to interfere with the reasonings, and for the sake of clarity. These lemmas are inspired by Propositions 4 and 5, \cite{remy2017}.

\begin{proof}[Proof of Lemma~\ref{lem:w_p-bound}]
We observe that (\ref{eq:w_p}) holds true for $p=0$ if one takes into consideration the initial recursion argument involving (\ref{e546})--(\ref{e546e}) together with Lemma \ref{lem:floor_quotient}. Let us suppose that the statement is true for a fixed $p\ge0$. Then, putting
 $Q_{0}=\left\{ 0\right\} $, we get 
\begin{align*}
\left\Vert \partial_{m_{1},t}^{n}w_{p+1}(t,z)\right\Vert _{\tilde{r}} & \leq B\sum_{i\in\Kk\cup\left\{ 0\right\} }\sum_{q\in Q_{i}}\left\Vert \partial_{m_{1},t}^{n+\kappa-i}\partial_{m_{2},z}^{q-p_{\kappa}}w_{p}(t,z)\right\Vert _{\tilde{r}}\\
 & \leq B'^{p}C'K'^{n+\kappa p+\kappa}B\sum_{i\in\Kk\cup\left\{ 0\right\} }\sum_{q\in Q_{i}}K'^{-i}M_{\left\lfloor \nicefrac{\left(n+\kappa-i\right)p_{\kappa}}{\kappa}\right\rfloor +p_{\kappa}p}^{s_{2}}\partial_{m_{2},z}^{q-p_{\kappa}}(P_{p})(|z|).
\end{align*}
For $K'\ge 1$ we get 
\begin{multline*}
\left\| \partial_{m_{1},t}^{n}w_{p+1}(t,z)\right\| _{\tilde{r}}\\
\leq B'^{p}C'K'^{n+\kappa p+\kappa}BM_{\left\lfloor \nicefrac{np_{\kappa}}{\kappa}\right\rfloor +p_{\kappa}+p_{\kappa}p}^{s_{2}}\sum_{i\in\Kk\cup\left\{ 0\right\} }\sum_{q\in Q_{i}}\frac{M_{\left\lfloor \nicefrac{\left(n-i\right)p_{\kappa}}{\kappa}\right\rfloor +p_{\kappa}+p_{\kappa}p}^{s_{2}}}{M_{\left\lfloor \nicefrac{np_{\kappa}}{\kappa}\right\rfloor +p_{\kappa}+p_{\kappa}p}^{s_{2}}}\partial_{m_{2},z}^{q-p_{\kappa}}(P_{p})(|z|).
\end{multline*}
From Remark \ref{rem-p_k-p_i} and Lemma \ref{lem:decreasing} we
conclude that 
\[
\frac{M_{\left\lfloor \nicefrac{\left(n-i\right)p_{\kappa}}{\kappa}\right\rfloor +p_{\kappa}+p_{\kappa}p}^{s_{2}}}{M_{\left\lfloor \nicefrac{np_{\kappa}}{\kappa}\right\rfloor +p_{\kappa}+p_{\kappa}p}^{s_{2}}}\le\frac{M_{\left\lfloor \nicefrac{np_{\kappa}}{\kappa}\right\rfloor -p_{i}+p_{\kappa}+p_{\kappa}p}^{s_{2}}}{M_{\left\lfloor \nicefrac{np_{\kappa}}{\kappa}\right\rfloor +p_{\kappa}+p_{\kappa}p}^{s_{2}}}\le\frac{M_{p_{\kappa}p}^{s_{2}}}{M_{p_{\kappa}p+p_{i}}^{s_{2}}},
\]
and we can further estimate the right-hand side of the inequality
above to receive
\begin{align}
\left\Vert \partial_{m_{1},t}^{n}w_{p+1}(t,z)\right\Vert _{\tilde{r}} & \leq B'^{p}C'K'^{n+\kappa p+\kappa}BM_{\left\lfloor \nicefrac{np_{\kappa}}{\kappa}\right\rfloor +p_{\kappa}+p_{\kappa}p}^{s_{2}}\sum_{i\in\Kk\cup\left\{ 0\right\} }\sum_{q\in Q_{i}}\left(\frac{M_{p_{\kappa}p}}{M_{p_{\kappa}p+p_{i}}}\right)^{s_{2}}\partial_{m_{2},z}^{q-p_{\kappa}}(P_{p})(|z|).
\end{align}

Now let us notice that the set $\Kk\setminus\Kk'$ has no more than $\kappa$ elements. From this observation it follows that
\begin{multline*}
\sum_{i\in\Kk\cup\left\{ 0\right\} }\sum_{q\in Q_{i}}\left(\frac{M_{p_{\kappa}p}}{M_{p_{\kappa}p+p_{i}}}\right)^{s_{2}}\partial_{m_{2},z}^{q-p_{\kappa}}(P_{p})(|z|) \\
 \leq\left[(1+\kappa)\partial_{m_{2},z}^{-p_{\kappa}}+\sum_{i\in\Kk'}\sum_{q\in Q_{i}}\left(\frac{M_{p_{\kappa}p}}{M_{p_{\kappa}p+p_{i}}}\right)^{s_{2}}\partial_{m_{2},z}^{q-p_{\kappa}}\right](P_{p})(|z|)\\
 \leq(1+\kappa)\left[\partial_{m_{2},z}^{-p_{\kappa}}+\sum_{i\in\Kk'}\sum_{q\in Q_{i}}\left(\frac{M_{p_{\kappa}p}}{M_{p_{\kappa}p+p_{i}}}\right)^{s_{2}}\partial_{m_{2},z}^{q-p_{\kappa}}\right](P_{p})(|z|).
\end{multline*}
If we take $B'=(1+\kappa)B$ then
\begin{multline*}
\left\Vert \partial_{m_{1},t}^{n}w_{p+1}(t,z)\right\Vert _{\tilde{r}}\leq B'^{p+1}C'K'^{n+\kappa p+\kappa}M_{\left\lfloor \nicefrac{np_{\kappa}}{\kappa}\right\rfloor +p_{\kappa}+p_{\kappa}p}^{s_{2}}\\
\times \left[\partial_{m_{2},z}^{-p_{\kappa}}+\sum_{i\in\Kk'}\sum_{q\in Q_{i}}\left[\frac{M_{p_{\kappa}p}}{M_{p_{\kappa}p+p_{i}}}\right]^{s_{2}}\partial_{m_{2},z}^{q-p_{\kappa}}\right](P_{p})(|z|),
\end{multline*}
which concludes the proof of this lemma.
\end{proof}

\begin{proof}[Proof of Lemma~\ref{lema589}]
Inequality (\ref{eq:P_p-bound}) holds for $p=0$. Let us now fix
any $p\in\NN$. Then, we have

\begin{multline*}
P_{p}(z)=\left(\partial_{m_{2},z}^{-p_{\kappa}}+\sum_{i\in\Kk'}\sum_{q\in Q_{i}}\left[\frac{M_{p_{\kappa}\left(p-1\right)}}{M_{p_{\kappa}\left(p-1\right)+p_{i}}}\right]^{s_{2}}\partial_{m_{2},z}^{q-p_{\kappa}}\right)\dots\left(\partial_{m_{2},z}^{-p_{\kappa}}+\sum_{i\in\Kk'}\sum_{q\in Q_{i}}\left[\frac{M_{p_{\kappa}}}{M_{p_{\kappa}+p_{i}}}\right]^{s_{2}}\partial_{m_{2},z}^{q-p_{\kappa}}\right)\\
\left(\partial_{m_{2},z}^{-p_{\kappa}}+\sum_{i\in\Kk'}\sum_{q\in Q_{i}}\left[\frac{M_{0}}{M_{p_{i}}}\right]^{s_{2}}\partial_{m_{2},z}^{q-p_{\kappa}}\right)\left(P_{0}\right)(z),
\end{multline*}
which can be also rewritten in the form 
$$
P_{p}(z)=\left(\mathcal{P}(\partial_{m_2,z}^{-1})\right)P_0(z),
$$
with 
\begin{multline*}
\mathcal{P}(\partial_{m_2,z}^{-1}):=\partial_{m_{2},z}^{-p_{\kappa}p}\\
+\sum_{j=1}^{p}\sum_{1\leq l_{1}<l_{2}<\dots<l_{j}\leq p}\sum_{\left(i_{1},\dots,i_{j}\right)\in\Kk'^{j}}\sum_{q_{i_{1}}\in Q_{i_{1}},\dots,q_{i_{j}}\in Q_{i_{j}}}\prod_{s=1}^{j}\left[\frac{M_{p_{\kappa}\left(l_{s}-1\right)}}{M_{p_{\kappa}\left(l_{s}-1\right)+p_{i_{s}}}}\right]^{s_{2}}\partial_{m_{2},z}^{q_{i_{1}}+\dots+q_{i_{j}}-p_{\kappa}p}.
\end{multline*}

Hence, 
\begin{multline*}
P_{p}(|z|)=\frac{|z|^{p_{\kappa}p}}{m_{2}(p_{\kappa}p)}+\sum_{j=1}^{p}\sum_{1\leq l_{1}<l_{2}<\dots<l_{j}\leq p}\sum_{\left(i_{1},\dots,i_{j}\right)\in\Kk'^{j}}\sum_{q_{i_{1}}\in Q_{i_{1}},\dots,q_{i_{j}}\in Q_{i_{j}}}\prod_{s=1}^{j}\left[\frac{M_{p_{\kappa}\left(l_{s}-1\right)}}{M_{p_{\kappa}\left(l_{s}-1\right)+p_{i_{s}}}}\right]^{s_{2}}\\\times\frac{|z|^{p_{\kappa}p-\left(q_{i_{1}}+\dots+q_{i_{j}}\right)}}{m_{2}\left(p_{\kappa}p-\left(q_{i_{1}}+\dots+q_{i_{j}}\right)\right)}.
\end{multline*}
Directly from Remark \ref{rem-p_k-p_i} we conclude that $p_{i}<p_{\kappa}$
for $i<\kappa$. From this it follows that 
\[
p_{\kappa}p-\left(q_{i_{1}}+\dots+q_{i_{j}}\right)\geq p_{\kappa}p-\left(p_{i_{1}}+\dots+p_{i_{j}}\right)\ge p_{\kappa}p-\left(p_{\kappa}-1\right)p=p.
\]
We can use the above and the fact that $|z|<r<1$ to receive 
\begin{multline*}
P_{p}(|z|)\le\frac{|z|^{p}}{m_{2}(p_{\kappa}p)}\left(1+\sum_{j=1}^{p}\sum_{1\leq l_{1}<l_{2}<\dots<l_{j}\leq p}\sum_{\left(i_{1},\dots,i_{j}\right)\in\Kk'^{j}}\sum_{q_{i_{1}}\in Q_{i_{1}},\dots,q_{i_{j}}\in Q_{i_{j}}}\prod_{s=1}^{j}\left[\frac{M_{p_{\kappa}\left(l_{s}-1\right)}}{M_{p_{\kappa}\left(l_{s}-1\right)+p_{i_{s}}}}\right]^{s_{2}}\right.\\
\times \left.\frac{m_{2}\left(p_{\kappa}p\right)}{m_{2}\left(p_{\kappa}p-\left(q_{i_{1}}+\dots+q_{i_{j}}\right)\right)}\right)
\end{multline*}

Let us note that 
\begin{align*}
\frac{m_{2}\left(p_{\kappa}p\right)}{m_{2}\left(p_{\kappa}p-\left(q_{i_{1}}+\dots+q_{i_{j}}\right)\right)} & \leq A_{2}^{q_{i_{1}}+\dots+q_{i_{j}}}\left[\frac{M_{p_{\kappa}p}}{M_{p_{\kappa}p-q_{i_{1}}-\dots-q_{i_{j}}}}\right]^{s_{2}}\\
& \leq A_{2}^{p_{\kappa}j}\left[\frac{M_{p_{\kappa}p}}{M_{p_{\kappa}\left(p-j\right)+\left(p_{\kappa}-p_{i_{1}}\right)+\dots+\left(p_{\kappa}-p_{i_{j}}\right)}}\right]^{s_{2}}.
\end{align*}
What remains now, is to find an upper bound for
\[
\left[\frac{M_{p_{\kappa}p}}{M_{p_{\kappa}\left(p-j\right)+\left(p_{\kappa}-p_{i_{1}}\right)+\dots+\left(p_{\kappa}-p_{i_{j}}\right)}}\right]^{s_{2}}\prod_{s=1}^{j}\left[\frac{M_{p_{\kappa}\left(l_{s}-1\right)}}{M_{p_{\kappa}\left(l_{s}-1\right)+p_{i_{s}}}}\right]^{s_{2}}.
\]

First let us notice that 
\begin{equation*}
\prod_{s=1}^{j}\frac{M_{p_{\kappa}\left(l_{s}-1\right)}}{M_{p_{\kappa}\left(l_{s}-1\right)+p_{i_{s}}}}  \leq\prod_{s=1}^{j}\frac{M_{p_{\kappa}\left(s-1\right)}}{M_{p_{\kappa}\left(s-1\right)+p_{i_{s}}}}=\prod_{s=1}^{j}\frac{M_{p_{\kappa}\left(s-1\right)}}{M_{p_{\kappa}\left(s-1\right)+1}}\cdot\ldots\cdot\frac{M_{p_{\kappa}\left(s-1\right)+p_{i_{s}}-1}}{M_{p_{\kappa}\left(s-1\right)+p_{i_{s}}}}.
\end{equation*}
We would like to rewrite the right-hand side of this inequality as a product containing a term $\frac{1}{M_{p_\kappa j}}$. To this end notice that for $s=1,2,\dots,j$ we have
\begin{multline*}
\frac{M_{p_{\kappa}\left(s-1\right)}}{M_{p_{\kappa}\left(s-1\right)+1}}\cdot\ldots\cdot\frac{M_{p_{\kappa}\left(s-1\right)+p_{i_{s}}-1}}{M_{p_{\kappa}\left(s-1\right)+p_{i_{s}}}}=\left(\frac{M_{p_{\kappa}\left(s-1\right)}}{M_{p_{\kappa}\left(s-1\right)+1}}\cdot\ldots\cdot\frac{M_{p_{\kappa}\left(s-1\right)+p_{\kappa-1}}}{M_{p_{\kappa}\left(s-1\right)+p_{\kappa}}}\right)\\
\times\left(\frac{M_{p_{\kappa}\left(s-1\right)+p_{i_s}+1}}{M_{p_{\kappa}\left(s-1\right)+p_{i_s}}}\cdot\ldots\cdot\frac{M_{p_{\kappa}\left(s-1\right)+p_{i_s}+p_\kappa-p_{i_s}}}{M_{p_{\kappa}\left(s-1\right)+p_{i_s}+p_\kappa-p_{i_s}-1}}\right).
\end{multline*}
Using this fact we can write 
$$
\prod_{s=1}^{j}\frac{M_{p_{\kappa}\left(s-1\right)}}{M_{p_{\kappa}\left(s-1\right)+1}}\cdot\ldots\cdot\frac{M_{p_{\kappa}\left(s-1\right)+p_{i_{s}}-1}}{M_{p_{\kappa}\left(s-1\right)+p_{i_{s}}}}=\frac{M_{0}}{M_{p_{\kappa}j}}\prod_{s=1}^{j}\frac{M_{p_{\kappa}\left(s-1\right)+p_{i_{s}}+1}}{M_{p_{\kappa}\left(s-1\right)+p_{i_{s}}}}\cdot\dots\cdot\frac{M_{p_{\kappa}\left(s-1\right)+p_{i_{s}}+p_{\kappa}-p_{i_{s}}}}{M_{p_{\kappa}\left(s-1\right)+p_{i_{s}}+p_{\kappa}-p_{i_{s}}}}
$$

Moreover, we have 
\begin{multline*}
\frac{1}{M_{p_{\kappa}\left(p-j\right)+\left(p_{\kappa}-p_{i_{1}}\right)+\dots+\left(p_{\kappa}-p_{i_{j}}\right)}}  =\frac{1}{M_{p_{\kappa}\left(p-j\right)}}\cdot\frac{M_{p_{\kappa}\left(p-j\right)}}{M_{p_{\kappa}\left(p-j\right)+\left(p_{\kappa}-p_{i_{1}}\right)+\dots+\left(p_{\kappa}-p_{i_{j}}\right)}}\\
 \leq\frac{1}{M_{p_{\kappa}\left(p-j\right)}}\cdot\frac{M_{0}}{M_{\left(p_{\kappa}-p_{i_{1}}\right)+\dots+\left(p_{\kappa}-p_{i_{j}}\right)}}\\
 =\frac{1}{M_{p_{\kappa}\left(p-j\right)}}\prod_{s=1}^{j}\frac{M_{\left(p_{\kappa}-p_{i_{1}}\right)+\dots+\left(p_{\kappa}-p_{i_{s-1}}\right)}}{M_{\left(p_{\kappa}-p_{i_{1}}\right)+\dots+\left(p_{\kappa}-p_{i_{s-1}}\right)+1}}\cdot\dots\cdot\frac{M_{\left(p_{\kappa}-p_{i_{1}}\right)+\dots+\left(p_{\kappa}-p_{i_{s-1}}\right)+p_{\kappa}-p_{i_{s}}-1}}{M_{\left(p_{\kappa}-p_{i_{1}}\right)+\dots+\left(p_{\kappa}-p_{i_{s-1}}\right)+p_{\kappa}-p_{i_{s}}}}.
\end{multline*}
We can therefore write 
\[
\left[\frac{M_{p_{\kappa}p}}{M_{p_{\kappa}\left(p-j\right)+\left(p_{\kappa}-p_{i_{1}}\right)+\dots+\left(p_{\kappa}-p_{i_{j}}\right)}}\right]^{s_{2}}\prod_{s=1}^{j}\left[\frac{M_{p_{\kappa}\left(l_{s}-1\right)}}{M_{p_{\kappa}\left(l_{s}-1\right)+p_{i_{s}}}}\right]^{s_{2}}\le\left(\frac{M_{p_{\kappa}p}}{M_{p_{\kappa}j}M_{p_{\kappa}(p-j)}}\right)^{s_{2}}\prod_{s=1}^{j}B_{s}^{s_{2}}
\]
with 
\[
B_{s}=\left\{ \begin{array}{rl}
1 & \quad\textrm{ for }i_{s}=\kappa\\
{\displaystyle \prod_{\sigma=1}^{p_{\kappa}-p_{i_{s}}}\frac{M_{\left(p_{\kappa}-p_{i_{1}}\right)+\dots+\left(p_{\kappa}-p_{i_{s-1}}\right)+\sigma-1}M_{p_{\kappa}\left(s-1\right)+p_{i_{s}}+\sigma}}{M_{\left(p_{\kappa}-p_{i_{1}}\right)+\dots+\left(p_{\kappa}-p_{i_{s-1}}\right)+\sigma}M_{p_{\kappa}\left(s-1\right)+p_{i_{s}}+\sigma-1}}} & \quad\textrm{ for }i_{s}<\kappa.
\end{array}\right.
\]
If $i_{s}<\kappa$ then from (\ref{eq:seq_property}) and Lemma \ref{lem:decreasing}
it follows that
\[
B_{s}\leq\prod_{\sigma=1}^{p_{\kappa}-p_{i_{s}}}\frac{M_{s-1+\sigma-1}}{M_{s-1+\sigma}}\frac{M_{p_{\kappa}s-p_{\kappa}+p_{i_{s}}+\sigma}}{M_{p_{\kappa}s-p_{\kappa}+p_{i_{s}}+\sigma-1}}\leq\prod_{\sigma=1}^{p_{\kappa}-p_{i_{s}}}\frac{M_{s-1}}{M_{s}}\frac{M_{p_{\kappa}s}}{M_{p_{\kappa}s-1}}\leq C_{3}(p_{\kappa})^{p_{\kappa}-p_{i_{s}}}.
\]

Thus, one can upper estimate $P_{p}(|z|)$ by

\begin{align*}
 & \phantom{\le} \left(1+\sum_{j=1}^{p}\binom{p}{j}
\sum_{\substack{\left(i_{1},\dots,i_{j}\right)\in\Kk'^{j} \\ q_{i_{1}}\in Q_{i_{1}},\dots,q_{i_{j}}\in Q_{i_{j}}}}
A_{2}^{p_{\kappa}j}\left(\frac{M_{p_{\kappa}p}}{M_{p_{\kappa}j}M_{p_{\kappa}(p-j)}}\right)^{s_{2}}\prod_{s=1}^{j}C_{3}(p_{\kappa})^{s_{2}(p_{\kappa}-p_{i_{s}})}\right)\frac{|z|^{p}}{m_{2}(p_{\kappa}p)}\\
 & \le\left(1+\sum_{j=1}^{p}\binom{p}{j}
\sum_{\substack{\left(i_{1},\dots,i_{j}\right)\in\Kk'^{j} \\ q_{i_{1}}\in Q_{i_{1}},\dots,q_{i_{j}}\in Q_{i_{j}}}}
A_{2}^{p_{\kappa}j}\left(\frac{M_{p_{\kappa}p}}{M_{p_{\kappa}j}M_{p_{\kappa}(p-j)}}\right)^{s_{2}}C_{3}(p_{\kappa})^{s_{2}(p_{\kappa}-1)j}\right)\frac{|z|^{p}}{m_{2}(p_{\kappa}p)}\\
 & =\left(\sum_{j=0}^{p}\binom{p}{j}A_{2}^{p_{\kappa}j}\left(\frac{M_{p_{\kappa}p}}{M_{p_{\kappa}j}M_{p_{\kappa}(p-j)}}\right)^{s_{2}}C_{3}(p_{\kappa})^{s_{2}(p_{\kappa}-1)j}
\sum_{\substack{\left(i_{1},\dots,i_{j}\right)\in\Kk'^{j} \\ q_{i_{1}}\in Q_{i_{1}},\dots,q_{i_{j}}\in Q_{i_{j}}}}
1\right)\frac{|z|^{p}}{m_{2}(p_{\kappa}p)}.
\end{align*}
In order to find an upper bound of 
\[
\sum_{\substack{\left(i_{1},\dots,i_{j}\right)\in\Kk'^{j} \\ q_{i_{1}}\in Q_{i_{1}},\dots,q_{i_{j}}\in Q_{i_{j}}}}
1
\]
it remains to notice that $|\Kk'^{j}|\le\kappa^{j}$ and $|Q_{i}|\le p_{\kappa}$
for every $i\in\Kk'.$ From these observations and condition (mg)
it follows that 
\[
P_{p}(|z|)\le\tilde{A}_{1}^{p_{\kappa}p}\left(1+A_{2}^{p_{\kappa}}C_{3}(p_{\kappa})^{s_{2}(p_{\kappa}-1)}\kappa p_{\kappa}\right)^{p}\frac{|z|^{p}}{m_{2}(p_{\kappa}p)}
\]
for certain constant $\tilde{A}_{1}>0$. The result follows from here.
\end{proof}



\begin{thebibliography}{99}
\bibitem{balser} W. Balser, Formal power series and linear systems of meromorphic ordinary differential equations, Universitext, Springer-Verlag, New York, 2000.
\bibitem{BY} W. Balser, M. Yoshino, \textit{Gevrey order of formal power series solutions of inhomogeneous partial differential equations with constant coefficients}, Funkcial. Ekvac. 53 (2010), 411--434.
\bibitem{immink} G.K. Immink, \emph{Exact asymptotics of nonlinear difference equations with levels 1 and 1+,} Ann. Fac. Sci. Toulouse T.XVII (2) (2008), 309--356.
\bibitem{immink2} G. K. Immink, \emph{Accelero-summation of the formal solutions of nonlinear difference equations,} Ann. Inst. Fourier (Grenoble) 61 (2011), no. 1, 1--51.
\bibitem{jjs17} J. Jim\'enez-Garrido, J. Sanz, G. Schindl, \emph{Log-convex sequences and nonzero proximate orders,} J. Math. Anal. Appl. 448(2) (2017) 1572--1599.
\bibitem{kilbas10} A. Kilbas, \emph{Partial fractional differential equations and some of their applications,} Analysis, Munchen 30 (2010), no. 1, 35--66.
\bibitem{kilbasetal06} A. Kilbas, H. Srivastava, J. Trujillo. Theory and applications of fractional differential equations. North-Holland Math. Stud. 204, Elsevier, Amsterdam, 2006.
\bibitem{lastramaleksanz} A. Lastra, S. Malek, J. Sanz, \emph{Summability in general Carleman ultraholomorphic classes}, J. Math. Anal. Appl. 430 (2015), 1175--1206. 
\bibitem{lastramaleksanz2} A. Lastra, S. Malek, J. Sanz, \emph{Strongly regular multi-level solutions of singularly perturbed linear partial differential equations}, Results. Math. 70 (2016), 581--614.
\bibitem{LMS} A. Lastra, S. Michalik, M. Suwi\'nska, \emph{Estimates of formal solutions for some generalized moment partial differential equations}, 2020, submitted (\href{https://arxiv.org/abs/1911.11998}{arXiv:1911.11998}).
\bibitem{LMS2}  A. Lastra, S. Michalik, M. Suwi\'nska, \emph{Summability of formal solutions for some generalized moment partial differential equations}, to appear in Results. Math., 2021.
\bibitem{michalik10} S. Michalik, \textit{Summability and fractional linear partial differential equations}, J. Dyn. Control Syst. 16(4) (2010), 557--584. 
\bibitem{michalik12} S. Michalik, \textit{Multisummability of formal solutions of inhomogeneous linear partial differential equations with constant coefficients}, J. Dyn. Control Syst. 18 (2012), 103--133.
\bibitem{M} S. Michalik, \textit{Analytic solutions of moment partial differential equations with constant coefficients}, Funkcial. Ekvac. 56 (2013), 19--50.
\bibitem{michalik13jmaa} S. Michalik, \textit{Summability of formal solutions of linear partial differential equations with divergent initial data}, J. Math. Anal. Appl. 406 (2013), 243--260.
\bibitem{michalik17fe} S. Michalik, \textit{Analytic summable solutions of inhomogeneous moment partial differential equations}, Funkcial. Ekvac. 60 (2017), 325--351.
\bibitem{michaliksuwinska}  S. Michalik, M. Suwi\'nska, \textit{Gevrey estimates for certain moment partial differential equations}, Complex Differential and Difference Equations, De Gruyter Proceedings in Mathematics (2020), 391--408.
\bibitem{michaliktkacz} S. Michalik, B. Tkacz, \emph{The Stokes Phenomenon for some moment partial differential equations}, J. Dyn. Control Syst. 25 (2019),  573--598. 
\bibitem{remy2016} P. Remy, \emph{Gevrey order and summability of formal series solutions of some classes of inhomogeneous linear partial differential equations with variable coefficients}. J. Dyn. Control Syst. 22 (2016), 693--711. 
\bibitem{remy2017} P. Remy, \emph{Gevrey order and summability of formal series solutions of certain classes of inhomogeneous linear integro-differential equations with variable coefficients}. J. Dyn. Control Syst. 23 (2017), 853--878.
\bibitem{sanzproceedings} J. Sanz, \emph{Asymptotic analysis and summability of formal power series}, Analytic, algebraic and geometric aspects of differential equations, Trends Math., Birkh\"auser/Springer, Cham, (2017) 199--262.
\bibitem{suwinska} M. Suwi\'nska, \textit{Gevrey estimates of formal solutions for certain moment partial differential equations with variable coefficients},  J. Dyn. Control Syst. (2020). \href{https://doi.org/10.1007/s10883-020-09504-3}{https://doi.org/10.1007/s10883-020-09504-3}
\bibitem{thilliez} V. Thilliez, \emph{Division by flat ultradifferentiable functions and sectorial extensions}, Results Math. 44 (2003), 169--188.
\bibitem{yonemura} A. Yonemura, \emph{Newton polygons and formal Gevrey classes}, Publ. RIMS Kyoto Univ. 1990; 26:197--204.
\end{thebibliography}
\end{document}